\newfont{\cyrfnt}{wncyr10}
\newtheorem{theorem}{Theorem}
\newtheorem{proposition}{Proposition}
\newtheorem{lemma}{Lemma}
\newtheorem{remark}{Remark}
\newtheorem{example}{Example}
\newenvironment{definition}
{\smallskip\noindent{\bf Definition\/}:}{\smallskip\par}
\newenvironment{examples}
{\smallskip\noindent{\bf Examples\/}.}{\smallskip\par}
\newenvironment{proof}{\begin{ProofwCaption}{Proof}}{\end{ProofwCaption}}
\newenvironment{proof*}[1]{\begin{ProofwCaption}{{#1}}}{\end{ProofwCaption}}
\newenvironment{ProofwCaption}[1]%
  {\addvspace\theorempreskipamount \noindent{\it #1.}\rm}%
  {\qed \par \addvspace\theorempostskipamount}
\newcommand{\qedsymbol}{{\rm $\Box$}}
\newcommand{\qed}{\hfill\qedsymbol}
\newcommand{\CC}{{\mathbb C}}
\newcommand{\PP}{{\mathbb P}}
\newcommand{\ZZ}{{\mathbb Z}}
\newcommand{\Ind}{{\rm Ind}\,}
\title{A version of the Berglund--H\"ubsch--Henningson duality with non-abelian groups}
\author{Wolfgang Ebeling and Sabir M.~Gusein-Zade
\thanks{Partially supported by DFG. The work of the second author
(Sections~\ref{sect:NC-Saito}, \ref{sect:PC}, and \ref{sect:main})
was supported by the grant 16-11-10018 of the Russian Science Foundation.
Keywords: group action, invertible polynomial, mirror symmetry, Berglund--H\"ubsch--Henningson duality,
equivariant Euler characteristic, Saito duality.
Mathematical Subject Classification -- MSC2010: 14J33, 57R18, 32S55, 19A22
}
}
\date{}
\begin{document}
\selectlanguage{english}

\maketitle

\begin{abstract}
A.~Takahashi suggested a conjectural
method to find mirror symmetric pairs consisting of invertible polynomials and  symmetry groups generated by
some diagonal symmetries and some permutations of variables.
Here we generalize the Saito duality between Burnside rings to a case of non-abelian groups and prove a
``non-abelian'' generalization of the statement about the equivariant Saito duality property for invertible polynomials.
It turns out that the statement holds only under a special condition on the action of the subgroup of the
permutation group called here PC (``parity condition''). An inspection of data on Calabi--Yau threefolds
obtained from quotients by non-abelian groups shows that the pairs found
on the basis of the method of Takahashi have symmetric pairs of Hodge numbers if and only if they satisfy PC.
\end{abstract}

\section{Introduction} \label{sect:Intro}
Mirror symmetry in mathematics started from the celebrated observation by physicists that there existed pairs
of Calabi--Yau manifolds with symmetric sets of Hodge numbers.
P.~Berglund, T.~H\"ubsch and M.~Henningson (\cite{BH1}, \cite{BH2}) found a method to construct mirror symmetric
Calabi--Yau manifolds using so-called invertible polynomials: see details below. They considered pairs $(f,G)$
consisting of an invertible polynomial $f$ and a (finite abelian) group $G$ of diagonal symmetries of $f$.
To a pair $(f,G)$ one associates the Berglund--H\"ubsch--Henningson (BHH) dual pair $(\widetilde{f},\widetilde{G})$.
These pairs are also considered as orbifold Landau--Ginzburg models. 
The construction in \cite{BH1}, \cite{BH2} involves the quotient stacks of the hypersurfaces
defined by $f=0$ and $\widetilde{f}=0$ in weighted projective spaces by the groups $G$ and $\widetilde{G}$
respectively.
There were found a number of symmetries
of invariants corresponding to BHH dual pairs. For example, in \cite{Kawai-Yang} and \cite{BH2}, there was
discovered a duality of certain elliptic genera of dual pairs. In \cite{CR-2011}, there were
constructed isomorphisms between certain Chen--Ruan orbifold cohomology groups of dual pairs of Calabi--Yau type.
There were also found symmetries between invariants related to the Milnor fibres of $f$ and $\widetilde{f}$.
In \cite{EG-MMJ}, it was shown that the reduced orbifold Euler
characteristics of the Milnor fibres of dual pairs coincide up to sign. In \cite{EGT}, it was shown that the orbifold
E-functions (generating functions for the exponents of the monodromy actions on orbifold versions of the mixed Hodge
structures on the Milnor fibres) of dual pairs possess a certain symmetry property. In \cite{EG-BLMS}, there was
constructed a duality between the Burnside rings of a finite abelian group and of its group of characters which was
interpreted as the Saito duality in some cases. For the groups of diagonal symmetries of invertible polynomials
this duality is related to the BHH duality. In \cite{EG-BLMS}, it was shown that the reduced equivariant Euler
characteristics of the Milnor fibres of dual invertible polynomials with the actions of their groups of diagonal
symmetries defined as elements of the Burnside rings of the groups are Saito dual to each other
up to sign.

In \cite{Oguiso-Yu} and \cite{Yu}, there were presented many Calabi--Yau threefolds obtained as crepant resolutions
of quotient varieties of smooth quintic threefolds by non-abelian symmetry groups. The data computed for them
indicates that there might be mirror dual pairs. The majority of these threefolds are defined by invertible
polynomials with the symmetry groups generated by subgroups of the groups of diagonal symmetries and subgroups of
the group $S_5$ of permutations of variables.

A.~Takahashi (personal communication) suggested a conjectural method to find mirror symmetric pairs. The method associates to a pair
$(f, \widehat{G})$ with a non-abelian group $\widehat{G}$ of a certain type (namely, a semi-direct product
$G\rtimes S$ of a subgroup $G$ of the group $G_f$ of the diagonal symmetries of $f$ and a subgroup $S\subset S_n$
of the group of permutations of variables) a dual pair $(\widetilde{f}, \widetilde{\widehat{G}})$. An analysis
of the data in \cite{Yu} shows that in some cases the method detects pairs which can be considered as mirror
symmetric ones, whereas in some other cases it fails.

Here we generalize the Saito duality between Burnside
rings to a case of non-abelian groups and prove a ``non-abelian'' generalization of the statement from \cite{EG-BLMS}.
Namely, we compute the equivariant Euler characteristic of the Milnor fibre of an invertible
polynomial with the action of the semidirect product of the group $G_f$ of the diagonal symmetries of $f$ and of a
subgroup $S$ of the group $S_n$ of permutations of variables as an element of a version $A^{\rtimes}(G_f \rtimes S)$
of the Burnside ring. We define a non-abelian version of the Saito duality as a group isomorphism between
$A^{\rtimes}(G_f \rtimes S)$ and $A^{\rtimes}((G_f)^* \rtimes S)$
($(G_f)^*={\rm Hom\,}(G_f,\CC^*)=G_{\widetilde{f}}$) and compare the reduced equivariant Euler characteristics
of the Milnor fibres for Berglund--H\"ubsch--Henningson--Takahashi dual pairs. We show that these reduced
equivariant Euler characteristics are up to sign Saito dual to each other only under a special condition
on the action of the subgroup of the permutation group called here PC (``parity condition'').
We try to analyze a relation of the PC-condition with the mirror symmetry.
An inspection of \cite[Table~5]{Yu} shows that the pairs found on the basis of the method of Takahashi
(there are 13 of them with non-trivial groups $S$) might be considered as mirror symmetric ones (have
symmetric pairs of Hodge numbers) if and only if they satisfy PC. This indicates that the condition PC seems to
be necessary for the mirror symmetry of Berglund--H\"ubsch--Henningson--Takahashi dual pairs.

The authors are grateful to A.~Takahashi for explaining his ideas on non-abelian mirror symmetry
and to the referees of the paper for a number of useful suggestions.

\section{Invertible polynomials and their symmetry groups} \label{sect:invertible_poly}
An invertible polynomial in $n$ variables is a quasihomogeneous polynomial $f$ with the number of monomials 
equal to the number $n$ of variables (that is 
\[ f(x_1, \ldots , x_n)=\sum_{i=1}^n a_i \prod_{j=1}^n x_j^{E_{ij}} \, ,
\]
where $a_i$ are non-zero complex numbers and $E_{ij}$ are non-negative integers) such that the matrix
$E=(E_{ij})$ is non-degenerate and $f$ has an isolated critical point at the origin. 

\begin{remark}
The condition $\det{E}\ne 0$ is equivalent to the condition that the weights $q_1$, \dots, $q_n$ of
the variables in the polynomial $f$ are well defined 
(if one assumes the quasidegree to be equal to $1$). In fact they are defined by the equation
$$
E\cdot(q_1, \ldots, q_n)^T=(1,\ldots, 1)^T\,.
$$
Without loss of generality one may assume that all the coefficients $a_i$ are equal to $1$.
\end{remark}

A classification of invertible polynomials is given in \cite{KS}. Each invertible polynomial is the direct
(``Sebastiani--Thom'') sum of atomic polynomials in different sets of variables of the following types:
\begin{enumerate}
 \item[1)] chains: $x_1^{p_1}x_2+x_2^{p_2}x_3+\ldots+x_{m-1}^{p_{m-1}}x_m+x_m^{p_m}$, 
 $m\ge 1$\,;
 \item[2)] loops: $x_1^{p_1}x_2+x_2^{p_2}x_3+\ldots+x_{m-1}^{p_{m-1}}x_m+x_m^{p_m}x_1$, 
 $m\ge 2$\,.
\end{enumerate}

The group of the diagonal symmetries of $f$ is
$$
G_f=\{\underline{\lambda}=(\lambda_1, \ldots, \lambda_n)\in(\CC^*)^n:
f(\lambda_1x_1, \ldots, \lambda_nx_n)=f(x_1, \ldots, x_n)\}\,,
$$
where $\CC^*=\CC\setminus\{0\}$ is the set of non-zero complex numbers.
One can see that $G_f$ is an abelian group of order $\vert\det E\,\vert$. 
The Milnor fibre $V_f=\{x\in\CC^n:f(x)=1\}$ of the invertible polynomial $f$
is a complex manifold of dimension $n-1$ with the natural action of the group $G_f$. 

The Berglund--H\"ubsch transpose of $f$ is
$$
\widetilde{f}(x_1, \ldots, x_n)=\sum_{i=1}^n \prod_{j=1}^n x_j^{E_{ji}}\,.
$$
The group $G_{\widetilde{f}}$ of the diagonal symmetries of $\widetilde{f}$ is in a canonical way isomorphic
to the group $G_f^*={\rm Hom}(G_f,\CC^*)$ of characters of $G_f$ (see, e.~g., \cite[Proposition 2]{EG-BLMS}).
For a subgroup $G$ of $G_f$, the {\em Berglund--H\"ubsch--Henningson} (BHH) {\em dual} to the pair $(f,G)$
is the pair $(\widetilde{f},\widetilde{G})$, where $\widetilde{G}\subset G_{\widetilde{f}}=G_f^*$ is the subgroup
of characters of $G_f$ vanishing (i.~e.\ being equal to $1$) on the subgroup $G$.

Let the permutation group $S_n$ act on the space $\CC^n$ by permuting the variables. If an invertible polynomial
$f$ is invariant with respect to a subgroup $S\subset S_n$, then it is invariant with respect to the
semi-direct product $G_f\rtimes S$ (defined by the natural action of $S$ on $G_f$). 
Here the operation on $G_f\rtimes S$ is defined as follows. For
$(\underline{\lambda}, \sigma), (\underline{\mu}, \tau) \in G_f\rtimes S$ we define
$$
(\underline{\lambda}, \sigma) \cdot (\underline{\mu}, \tau) :=
(\underline{\lambda} \sigma(\underline{\mu}), \sigma \tau)\,,
$$
where 
$$
\sigma(\underline{\mu}):= (\sigma(\mu_1), \ldots , \sigma(\mu_n)) = (\mu_{\sigma^{-1}(1)}, \ldots , \mu_{\sigma^{-1}(n)})
$$
and the action of the product of $\sigma, \tau \in S$ on $\underline{\mu} \in G_f$ is defined as
$$
(\sigma \tau)(\underline{\mu}) := (\mu_{\tau^{-1}\sigma^{-1}(1)}, \ldots , \mu_{\tau^{-1}\sigma^{-1}(n)}).
$$
The semi-direct product
$G_f\rtimes S$ acts on $\CC^n$ 
in the following way. 
For $(\underline{\lambda}, \sigma) \in G_f\rtimes S$ and $\underline{x}=(x_1, \ldots , x_n)$ we have
$$
(\underline{\lambda}, \sigma)\underline{x} = (\lambda_1x_{\sigma^{-1}(1)}, \ldots , \lambda_nx_{\sigma^{-1}(n)}).
$$
The action of the group $G_f\rtimes S$ 
can be restricted
to the Milnor fibre
$V_f=\{\underline{x}\in\CC^n: f(\underline{x})=1\}$ of the polynomial $f$.

Let $G$ be a subgroup of $G_f$ invariant with respect to the group $S$, i.~e., $\sigma G=G$
for any $\sigma \in S$. In this case the semidirect product
$G\rtimes S$ is defined and the BHH dual subgroup $\widetilde G$ is also invariant with respect to $S$.

\begin{definition}
 The {\em Berglund--H\"ubsch--Henningson--Takahashi} (BHHT) {\em dual} to the pair $(f,G\rtimes S)$ is the pair
$(\widetilde{f},\widetilde{G}\rtimes S)$.
\end{definition}

The Burnside ring $A(H)$ of a finite group $H$ is the Grothendieck ring of finite $H$-sets.
This means that $A(H)$ is the abelian group generated by the classes $[(Z,H)]$
of finite $H$-sets modulo the relations
\begin{enumerate}
 \item[1)] if $(Z,H)$ is isomorphic to $(Z',H)$, i.~e., if there exists a bijection between $Z$ and $Z'$ preserving
 the $H$-action, then $[(Z,H)]=[(Z',H)]$;
 \item[2)] if $[(Z_1,H)]$ and $[(Z_2,H)]$ are finite $H$-sets, then $[(Z_1 \sqcup Z_2,H)]=[(Z_1,H)]+[(Z_2,H)]$.
 \end{enumerate}
 The multiplication in $A(H)$ is defined by the Cartesian product of the sets with the natural (diagonal)
 $H$-action.
As an abelian group, $A(H)$ is freely generated by the classes $[H/K]$ of the quotient sets $H/K$ for representatives
$K$ of the conjugacy classes $[K]$ of subgroups of $H$. For an $H$-space $X$ and for a point $x\in X$ the isotropy
subgroup of $x$ is $H_x :=\{g\in H: gx=x\}$. For a subgroup $K\subset H$ the set of fixed points of $K$ (that is
points $x$ with $H_x\supset K$) is denoted by $X^K$; the set of points $x\in X$ with the isotropy subgroup $K$
is denoted by $X^{(K)}$, the set of points $x\in X$ with the isotropy subgroup conjugate to $K$ is denoted by
$X^{([K])}$. The  {\em equivariant Euler characteristic} of a topological $H$-space $X$ 
was introduced in \cite{TtD} as
the element of the Burnside ring $A(H)$ defined by
\begin{equation}\label{eq:equiv_chi}
 \chi^H(X) :=\sum_{[K]\in{\rm Conjsub\,}H}\chi(X^{([K])}/H)[H/K]\,,
\end{equation}
where ${\rm Conjsub\,}H$ is the set of conjugacy classes of subgroups of $H$.
The  {\em reduced equivariant Euler characteristic}  $\overline{\chi}^H(X)$ is
$\chi^H(X)-\chi^H(pt)=\chi^H(X)-[H/H]$. The equivariant Euler characteristic is a universal
additive topological invariant of varieties or manifolds with $H$-actions: see, e.~g., \cite{GZ-RMS}.
Therefore it determines other additive topological invariants, in particular, the standard Euler characteristic,
the Euler characteristic of the quotient and the orbifold Euler characteristic.
It does not determine analytic or algebraic invariants, say, (orbifold) Hodge numbers.

If $H$ is a subgroup of a finite group $G$, one has the {\em reduction} and the {\em induction} operations
${\rm Red}^G_H$ and ${\rm Ind}^G_H$ which convert $G$-spaces to $H$-spaces and $H$-spaces to $G$-spaces
respectively. The reduction ${\rm Red}^G_HX$ of a $G$-space $X$ is the same space considered with the action
of the
subgroup. The induction ${\rm Ind}^G_HX$ of an $H$-space $X$ is the quotient space $(G\times X)/\sim$,
where the equivalence relation $\sim$ is defined by: $(g_1,x_1)\sim(g_2,x_2)$ if (and only if) there exists $h\in H$
such that $g_2=g_1h$, $x_2=h^{-1}x_1$; the $G$-action on it is defined in the natural way. Applying the reduction and
the induction operations to finite $G$- and $H$-sets respectively, one gets the {\em reduction homomorphism}
${\rm Red}^G_H:A(G)\to A(H)$ and the {\em induction homomorphism} ${\rm Ind}^G_H:A(H)\to A(G)$. For a subgroup $K$
of $H$, one has ${\rm Ind}^G_H[H/K]=[G/K]$. The reduction homomorphism is a ring homomorphism, whereas the
induction one is a homomorphism of abelian groups.

For a finite abelian group $H$, let $H^*={\rm Hom}(H,\CC^*)$ be its group of characters. Just as for a subgroup
of $G_f$ above, for a subgroup $K\subset H$, the (BHH) dual subgroup of $H^*$ is
$$
\widetilde{K} :=\{\alpha\in H^*: \alpha(g)=1 \text{ for all } g\in K\}\,.
$$
The equivariant Saito duality (see \cite{EG-BLMS}) is the group homomorphism $D_H:A(H)\to A(H^*)$ defined by
$D_H([H/K]) :=[H^*/\widetilde{K}]$. In \cite{EG-BLMS}, it was shown that the reduced equivariant Euler characteristics
of the Milnor fibres of Berglund--H\"ubsch dual invertible polynomials $f$ and $\widetilde{f}$ with the actions
of the groups $G_f$ and $G_{\widetilde{f}}$ respectively are Saito dual to each other up to the sign $(-1)^n$.

\section{Non-abelian equivariant Saito duality} \label{sect:NC-Saito}
Let $G$ be a finite abelian group and let $S$ be a finite group with a homomorphism $\varphi:S\to {\text{Aut\,}}G$.
These data determine the semi-direct product $\widehat{G}=G\rtimes S$. Let $A^{\rtimes}(G\rtimes S)$ be the
Grothendieck group of finite $\widehat{G}$-sets with the isotropy subgroups of points conjugate to
$H\rtimes T\subset G\rtimes S$, where $H$ and $T$ are subgroups of $G$ and of $S$ respectively such that,
for $\sigma\in T$, the automorphism $\varphi(\sigma)$ preserves $H$. (The semidirect product structure on $H\rtimes T$
is defined by the homomorphism $\varphi_{\vert T}:T\to {\text{Aut\,}}H$.) The group $A^{\rtimes}(G\rtimes S)$ is
a subgroup of the Burnside ring $A(\widehat{G})$ of the group $\widehat{G}$. 
It is the free abelian group generated by 
the elements $\left[G\rtimes S/H\rtimes T\right]$ corresponding to
the conjugacy classes of the subgroups of the form $H\rtimes T$.
An element of $A^{\rtimes}(G\rtimes S)$ can be written in a unique way as 
$$
\sum_{[H\rtimes T]\in {\text{Conjsub\, }}\widehat{G}} a_{H\rtimes T}\left[G\rtimes S/H\rtimes T\right]
$$
with integers $a_{H\rtimes T}$.

\begin{proposition}\label{prop:conj}
 Subgroups $H_1\rtimes T_1$ and $H_2\rtimes T_2$ are conjugate in $G\rtimes S$ if and only if they are conjugate by
 an element $\sigma\in S\subset \widehat{G}$, i.~e.\ $T_2=\sigma^{-1}T_1\sigma$, $H_2=\varphi(\sigma)H_1$.
\end{proposition}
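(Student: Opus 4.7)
The ``if'' direction is immediate, so the whole task is the ``only if'' direction: given $(g,\sigma)\in G\rtimes S$ with
$(g,\sigma)(H_1\rtimes T_1)(g,\sigma)^{-1}=H_2\rtimes T_2$, I want to show that the ``diagonal'' element $(e,\sigma)$ already does the job, i.e.\ $T_2=\sigma T_1\sigma^{-1}$ and $H_2=\varphi(\sigma)H_1$.

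The plan is to compute $(g,\sigma)(h,\tau)(g,\sigma)^{-1}$ explicitly via the semidirect product multiplication rule given in the excerpt. Using $(g,\sigma)^{-1}=(\varphi(\sigma^{-1})(g^{-1}),\sigma^{-1})$, a direct calculation yields
$$
(g,\sigma)(h,\tau)(g,\sigma)^{-1}=\bigl(g\cdot\varphi(\sigma)(h)\cdot\varphi(\sigma\tau\sigma^{-1})(g^{-1}),\;\sigma\tau\sigma^{-1}\bigr).
$$
Projecting to the $S$-factor, the element lies in $H_2\rtimes T_2$ only if $\sigma\tau\sigma^{-1}\in T_2$; running $\tau$ over $T_1$ and using that conjugation is a bijection gives $T_2=\sigma T_1\sigma^{-1}$ at once.

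The crucial step — and the only place where something non-trivial happens — is the $G$-component. Here I would specialize $\tau=e$: the formula collapses to $(g\cdot\varphi(\sigma)(h)\cdot g^{-1},\,e)$, and because $G$ is abelian this is just $(\varphi(\sigma)(h),\,e)$. The hypothesis then forces $\varphi(\sigma)(h)\in H_2$ for every $h\in H_1$, i.e.\ $\varphi(\sigma)(H_1)\subseteq H_2$. Applying the same argument to the inverse conjugation $(g,\sigma)^{-1}$ (which conjugates $H_2\rtimes T_2$ back to $H_1\rtimes T_1$) gives the reverse inclusion, so $H_2=\varphi(\sigma)H_1$. Finally I would verify by the same formula that $(e,\sigma)(h,\tau)(e,\sigma)^{-1}=(\varphi(\sigma)(h),\sigma\tau\sigma^{-1})$, which by what we have just proved traces out all of $H_2\rtimes T_2$ as $(h,\tau)$ ranges over $H_1\rtimes T_1$.

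I expect the main (in fact only) obstacle to be keeping track of the semidirect product conventions, in particular the correct form of $(g,\sigma)^{-1}$ and the placement of $\varphi(\sigma\tau\sigma^{-1})$ versus $\varphi(\sigma)\varphi(\tau)$. Conceptually the proof is driven by the single observation that abelianness of $G$ makes the $g\cdot(\,\cdot\,)\cdot g^{-1}$ term disappear; this is exactly why one may restrict to conjugation by permutations, and it is precisely the ingredient that would fail for a general (non-abelian) $G$.
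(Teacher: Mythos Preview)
Your proof is correct and follows essentially the same idea as the paper's: both arguments boil down to the observation that, because $G$ is abelian, the $G$-component of the conjugating element contributes nothing. The paper organizes this slightly differently---it factors the conjugating element as a product of a $G$-part and an $S$-part and then uses the remark that a coset $(g^{-1}\varphi(\sigma)(g))H$ can be a subgroup only if it equals $H$---whereas you compute $(g,\sigma)(h,\tau)(g,\sigma)^{-1}$ in one shot and specialize to $\tau=e$; but the content is the same.
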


\begin{proof}
 Each element of $G\rtimes S$ is the  pair of an element of $G$ and an element of $S$. 
 To prove the statement, we have to show that, if the conjugation by an element of $G\subset G\rtimes S$ sends
 a subgroup $H\rtimes T$ to a subgroup of the form $K\rtimes T$, then $K=H$. We have
 $$
 g^{-1}(h,\sigma)g=\left(\left(g^{-1}\varphi(\sigma(g)\right)h,\sigma\right)\,.
 $$
 The set $\left(g^{-1}\varphi(\sigma(g)\right)H$ is a coset of the subgroup $H$. Thus it is a subgroup $K$ of $G$
 if and only if it coincides with $H$.
\end{proof}

Let $G^*={\text{Hom\, }}(G,\CC^*)$ be the group of characters on $G$. One has $G^{**}\cong G$ (canonically).
The homomorphism $\varphi:S\to {\text{Aut\,}}G$ induces a natural homomorphism $\varphi^*:S\to {\text{Aut\,}}G^*$:
$\langle\varphi^*(\sigma)\alpha, g\rangle=\langle\alpha, \varphi(\sigma^{-1})g\rangle$, where
$\langle\alpha, g\rangle:=\alpha(g)$. Let $\widehat{G}^*:=G^*\rtimes S$ be the semidirect product defined by the
homomorphism $\varphi^*$.
One can see that, if $\varphi(\sigma)$ preserves a subgroup $H\subset G$, then $\varphi^*(\sigma)$
preserves the subgroup $\widetilde{H}\subset G^*$. Thus for a semidirect product $H\rtimes T\subset G\rtimes S$
one has the semidirect product $\widetilde{H}\rtimes T\subset G^*\rtimes S$.

\begin{proposition}\label{prop:conj_dual}
 Subgroups $H_1\rtimes T_1$ and $H_2\rtimes T_2$ are conjugate in $G\rtimes S$ if and only if the subgroups
 $\widetilde{H_1}\rtimes T_1$ and $\widetilde{H_2}\rtimes T_2$ are conjugate in $G^*\rtimes S$.
\end{proposition}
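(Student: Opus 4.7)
The plan is to reduce both conjugacy conditions to the very explicit form given by Proposition~\ref{prop:conj}, and then to check that the operation $H\mapsto\widetilde{H}$ is equivariant with respect to the two $S$-actions $\varphi$ and $\varphi^*$. Because double-dualization is the identity (under the canonical isomorphism $G^{**}\cong G$), equivariance will immediately give the equivalence in both directions.

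First I would apply Proposition~\ref{prop:conj} to $G\rtimes S$: conjugacy of $H_1\rtimes T_1$ and $H_2\rtimes T_2$ is equivalent to the existence of $\sigma\in S$ with $T_2=\sigma^{-1}T_1\sigma$ and $H_2=\varphi(\sigma)H_1$. Applying the same proposition inside $G^*\rtimes S$, conjugacy of $\widetilde{H_1}\rtimes T_1$ and $\widetilde{H_2}\rtimes T_2$ is equivalent to the existence of $\tau\in S$ with $T_2=\tau^{-1}T_1\tau$ and $\widetilde{H_2}=\varphi^*(\tau)\widetilde{H_1}$. So the whole content of the proposition reduces to showing that for a fixed $\sigma\in S$ the equation $H_2=\varphi(\sigma)H_1$ holds if and only if $\widetilde{H_2}=\varphi^*(\sigma)\widetilde{H_1}$.

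The key step, and the one piece of actual content, is the identity
$$
\widetilde{\varphi(\sigma)H}=\varphi^*(\sigma)\widetilde{H}
$$
for any subgroup $H\subset G$ and any $\sigma\in S$. This follows directly from the definition of $\varphi^*$: an element $\alpha\in G^*$ lies in $\widetilde{\varphi(\sigma)H}$ iff $\langle\alpha,\varphi(\sigma)h\rangle=1$ for all $h\in H$; and by the formula $\langle\varphi^*(\sigma^{-1})\alpha,h\rangle=\langle\alpha,\varphi(\sigma)h\rangle$ this is equivalent to $\varphi^*(\sigma^{-1})\alpha\in\widetilde{H}$, i.e.\ $\alpha\in\varphi^*(\sigma)\widetilde{H}$. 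I do not anticipate any real obstacle here; the only thing to watch is the placement of $\sigma$ versus $\sigma^{-1}$ in the definition of $\varphi^*$, which is precisely what makes the formula come out covariantly.

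Once this equivariance is in place, the forward direction is immediate: $H_2=\varphi(\sigma)H_1$ implies $\widetilde{H_2}=\widetilde{\varphi(\sigma)H_1}=\varphi^*(\sigma)\widetilde{H_1}$. For the converse I would apply the same argument inside $G^*\rtimes S$ with the roles swapped, using that $\widetilde{\widetilde{H}}=H$ under $G^{**}\cong G$ and that the analogous equivariance holds for the dual pair $(G^*,\varphi^*)$, so that $\widetilde{H_2}=\varphi^*(\sigma)\widetilde{H_1}$ gives $H_2=\varphi(\sigma)H_1$. Combined with Proposition~\ref{prop:conj}, this yields the equivalence of the two conjugacy conditions and completes the proof.
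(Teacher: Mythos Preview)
Your proposal is correct and follows essentially the same route as the paper: reduce via Proposition~\ref{prop:conj} to conjugation by an element $\sigma\in S$, and observe that the same $\sigma$ works on the dual side. The paper's proof is a one-liner leaving the equivariance $\widetilde{\varphi(\sigma)H}=\varphi^*(\sigma)\widetilde{H}$ implicit, whereas you spell it out explicitly; this is a welcome addition of detail rather than a different approach.
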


\begin{proof}
 This is a direct consequence of Proposition~\ref{prop:conj}: if the subgroups $H_1\rtimes T_1$ and $H_2\rtimes T_2$
 are conjugate by an element $\sigma\in S$, then the subgroups $\widetilde{H_1}\rtimes T_1$ and
 $\widetilde{H_2}\rtimes T_2$ are conjugate by this element as well.
\end{proof}

\begin{definition}
 The {\em (``non-abelian'') equivariant Saito duality} corresponding to the group $\widehat{G}=G\rtimes S$
 is the group homomorphism $D^{\rtimes}_{\widehat{G}}:A^{\rtimes}(G\rtimes S)\to A^{\rtimes}(G^*\rtimes S)$
 defined (on the generators) by
 $$
 D^{\rtimes}_{\widehat{G}}([G\rtimes S/H\rtimes T])=
 [G^*\rtimes S/\widetilde{H}\rtimes T]\,.
 $$
\end{definition}

By Proposition~\ref{prop:conj_dual}, the homomorphism $D^{\rtimes}_{\widehat{G}}$ is well-defined.
One can see that $D^{\rtimes}_{\widehat{G}}$ is an isomorphism of the groups $A^{\rtimes}(G\rtimes S)$ and
$A^{\rtimes}(G^*\rtimes S)$ and $D^{\rtimes}_{\widehat{G^*}}D^{\rtimes}_{\widehat{G}}={\rm id}$
($\widehat{G^*}=G^*\rtimes S$)\,.

For a subgroup $S'\subset S$ one has the natural homomorphism
$\Ind_{G\rtimes S'}^{G\rtimes S}:A^{\rtimes}(G\rtimes S')\to A^{\rtimes}(G\rtimes S)$ sending 
the generator $\left[G\rtimes S'/H\rtimes T\right]$ to the generator $\left[G\rtimes S/H\rtimes T\right]$.
This homomorphism commutes with the Saito duality, i.~e\ the diagram
$$
\begin{CD}
A^{\rtimes}(G\rtimes S') @>D^{\rtimes}_{G\rtimes S'}>> A^{\rtimes}(G^*\rtimes S')\\
@VV\Ind_{G\rtimes S'}^{G\rtimes S}V
@VV\Ind_{G^*\rtimes S'}^{G^*\rtimes S}V
\\
A^{\rtimes}(G\rtimes S) @>D^{\rtimes}_{G\rtimes S}>> A^{\rtimes}(G^*\rtimes S)
\end{CD}
$$
is commutative.

\section{Equivariant Euler characteristic of the Milnor fibre} \label{sect:Euler_Milnor}
Let $f$ be an invertible polynomial in $n$ variables, let $G_f$ be the group of the diagonal symmetries of $f$,
and let $S$ be a subgroup of $S_n$ preserving $f$.

An atomic polynomial of chain type is not preserved by a permutation of variables different
from the identical one. An atomic polynomial of loop type may have non-trivial symmetries by permutations of
variables. This may happen in two ways. First, a loop 
\begin{equation} \label{eq:loop}
x_1^{p_1}x_2+x_2^{p_2}x_3+\ldots+x_{m-1}^{p_{m-1}}x_m+x_m^{p_m}x_1
\end{equation}
can be such that its length $m$ is divisible by an integer $\ell$, $\ell<m$, (that is $m=k\ell$ with $k>1$) and the
sequence $p_1, \ldots , p_m$ of its exponents is $\ell$-periodic, i.~e.\ $p_{i+\ell}=p_i$ for $i=1, \ldots , m$.
Here and below the indices ($i$ and $i+\ell$ in this case) are taken modulo $m$. A symmetry sends the variable
$x_i$ to the variable $x_{i+s\ell}$ for some $s=1,\ldots, k-1$. Symmetries of this sort will be called
{\em rotations}. Besides that one may have a flip of a loop (\ref{eq:loop}) which sends the variable $x_i$ to
the variable $x_{r-i}$ for some $r=1, 2, \ldots , m$ (again
the indices are considered modulo $m$). A permutation of variables of this sort preserves the
loop if and only if all the exponents $p_i$ are equal to 1. In this case either (\ref{eq:loop})
is not an invertible polynomial (since it has a non-isolated critical point at the origin) or it has
a non-degenerate critical point (i.~e.\ of type $A_1$) at the origin. Therefore we will exclude this type of
symmetries of loops from consideration.

The invertible polynomial $f$ is the direct (Sebastiani--Thom) sum of atomic polynomials (blocks).
In these terms the action of its symmetry group $S\subset S_n$ can be described in the following way. There are
some blocks which are permuted by the group $S$ (all permuted blocks are isomorphic). This means that, if a block
goes to itself under the action of an element $\sigma\in S$, then $\sigma$ acts on the block identically.
We shall say that these blocks are of the first type. All chain blocks are of the first type. Other blocks will be
said to be of the second type. All of them are loops and if $\sigma\in S$ sends a block to itself, then it acts
on the block (a loop) by a rotation.

One can see that the dual polynomial $\widetilde{f}$ is invariant with respect to the group $S$ as well.

To have the possibility to discuss a non-abelian equivariant Saito duality for dual invertible polynomials,
one has to know that the equivariant Euler characteristic of the Milnor fibre $V_f$ of a polynomial $f$ with
the $G_f \rtimes S$-action belongs to the subgroup $A^{\rtimes}(G_f \rtimes S) \subset A(G_f \rtimes S)$.
This will be proved in Section~\ref{sect:main} with the use of the following statement.

\begin{sloppypar}

\begin{proposition} \label{prop:exT}
If, for a conjugacy class $[ \check{T} ] \in {\rm Conjsub\,} (G_f \rtimes S)$, one has
$((\CC^\ast)^n)^{([ \check{T} ])} \neq \emptyset$, then the class $[ \check{T} ]$ contains
a representative of the form $\{ e \}Ê\times T$. 
Moreover, subgroups $\{ e \}Ê\times T_1$ and $\{ e \}Ê\times T_2$ are conjugate in $G_f \rtimes S$ if and only if
$T_1$ and $T_2$ are conjugate in $S$.
\end{proposition}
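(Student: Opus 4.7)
The plan is to produce, for any $\underline x \in (\CC^*)^n$ whose isotropy subgroup equals $\check T$, an element $\underline\mu \in G_f$ such that conjugation by $(\underline\mu, e) \in G_f \rtimes S$ transforms $\check T$ into a subgroup of the form $\{e\} \times T$ for a suitable $T \subset S$.

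First I would observe that from $(\underline\lambda, \sigma)\underline x = \underline x$ together with $x_i \neq 0$ one recovers $\lambda_i = x_i/x_{\sigma^{-1}(i)}$, so the projection $\check T \to S$ is injective with some image $T$; write $(\underline\lambda_\sigma, \sigma)$ for the unique lift of $\sigma \in T$. Applying the semi-direct product law,
\[
(\underline\mu, e)(\underline\lambda_\sigma, \sigma)(\underline\mu, e)^{-1} = \bigl(\underline\mu\,\underline\lambda_\sigma\,\sigma(\underline\mu)^{-1},\,\sigma\bigr),
\]
and the diagonal factor vanishes exactly when $\mu_i x_i = \mu_{\sigma^{-1}(i)} x_{\sigma^{-1}(i)}$ for all $i$ and $\sigma \in T$, i.e.\ when $\mu_i x_i$ is constant on each $T$-orbit $O \subset \{1,\ldots,n\}$. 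Parametrising $\mu_i = M_O/x_i$ for $i \in O$, the remaining requirement $\underline\mu \in G_f$ (that $\prod_j \mu_j^{E_{ij}} = 1$ for all $i$) rewrites as
\[
\prod_O M_O^{E_{R,O}} = \prod_j x_j^{E_{i_0 j}}, \qquad E_{R,O} := \sum_{j \in O} E_{i_0 j},
\]
with one equation per $T$-orbit $R$ on the rows of $E$; here $T$ acts on rows via the permutation $\tilde\sigma$ determined by $E_{\tilde\sigma(i),k} = E_{i,\sigma(k)}$, which is well defined since $f$ is $T$-invariant. Both sides are readily checked to depend only on $R$: the left by the $\sigma$-stability of $O$, the right because $\underline\lambda_\sigma \in G_f$ forces $\prod_k \lambda_{\sigma,k}^{E_{ik}} = 1$.

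The crux is to show that this $N \times N$ system ($N$ = number of $T$-orbits, common to rows and columns) is solvable in $(\CC^*)^N$. For this I would view $E$ as a $T$-equivariant isomorphism $\QQ^n \to \QQ^n$ and recognise $(E_{R,O})$ as the matrix of its restriction to the $T$-invariant subspaces, taken in the natural bases $\bigl\{\sum_{j \in O} e_j\bigr\}_O$ and $\bigl\{\sum_{i \in R} e_i\bigr\}_R$. Since the restriction of an equivariant isomorphism of $\QQ[T]$-modules to invariants is again an isomorphism, $\det(E_{R,O}) \neq 0$. Hence the monomial map $(M_O) \mapsto \bigl(\prod_O M_O^{E_{R,O}}\bigr)$ on $(\CC^*)^N$ is surjective, the required $M_O$ exist, and the corresponding $\underline\mu \in G_f$ conjugates $\check T$ to $\{e\} \times T$.

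The second assertion then follows from Proposition~\ref{prop:conj} applied to $H_1 \rtimes T_1 = \{e\} \times T_1$ and $H_2 \rtimes T_2 = \{e\} \times T_2$: these are conjugate in $G_f \rtimes S$ iff conjugate by an element $\sigma \in S$, and a direct computation gives $(e,\sigma)(\{e\} \times T)(e,\sigma)^{-1} = \{e\} \times \sigma T \sigma^{-1}$, so the criterion reduces to $T_1$ and $T_2$ being conjugate in $S$. The principal obstacle in the whole argument is the non-degeneracy of $(E_{R,O})$ — everything hinges on correctly identifying the induced $T$-action on rows of $E$ and recognising $(E_{R,O})$ as a restriction of $E$ to $T$-invariants.
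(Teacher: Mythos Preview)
Your argument is correct and takes a genuinely different route from the paper's proof.

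The paper proceeds by invoking the Kreuzer--Skarke classification: it decomposes $f$ into atomic chain and loop blocks, analyses how $T$ can act on them (permuting isomorphic blocks, or rotating loops), and then constructs the conjugating element $\underline{\mu}$ explicitly block by block. Case~1 (permuted blocks of the first type) is handled by setting $\underline{\mu}_{\alpha_0}=\underline{1}$ on a chosen block and transporting by the cocycle relation~(\ref{eqn:lambda_product}); Case~2 (loops with rotations) requires an explicit computation with the loop exponents, extracting a suitable root of unity for $\mu_1$ and checking a nontrivial consistency identity. The verification is hands-on and relies throughout on the structure theory of invertible polynomials.

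Your approach bypasses the classification entirely. You encode the condition ``$\underline{\mu}$ conjugates $\check T$ into $\{e\}\times T$'' as a system of monomial equations in parameters $M_O$ indexed by $T$-orbits on columns, observe that the exponent matrix $(E_{R,O})$ is precisely the matrix of $E$ restricted to $T$-invariants (in the natural orbit-sum bases on the column and row sides), and conclude non-degeneracy from the fact that an equivariant isomorphism of $\QQ[T]$-modules restricts to an isomorphism on invariants. This is cleaner and more conceptual: it uses only $\det E\neq 0$ and the $S$-invariance of $f$, and would apply verbatim to any setting where those two facts hold. The paper's proof, by contrast, yields explicit formulas for $\underline{\mu}$ and makes the dependence on the block structure visible, which may be useful elsewhere but is not needed for the statement at hand.

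One small point worth recording for completeness: the map $\sigma\mapsto\tilde\sigma$ defined by $E_{\tilde\sigma(i),k}=E_{i,\sigma(k)}$ is an \emph{anti}-homomorphism, so the genuine $T$-action on row indices is $\sigma\cdot i:=\widetilde{\sigma^{-1}}(i)$. This does not affect your argument, since the row orbits---and hence the invariant subspace $W^T$---are the same for $\tilde\sigma$ and $\widetilde{\sigma^{-1}}$, and the check that $E$ and $E^{-1}$ both preserve invariants goes through unchanged. Your treatment of the second assertion via Proposition~\ref{prop:conj} is exactly right.
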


\end{sloppypar}

\begin{proof}
Let $T=\pi(\check{T})$ where $\check{T}$ is a representative of the conjugacy class $[\check{T}]$ and 
$\pi: G_f \rtimes S \to S$ is the quotient map. Since the action of $G_f$ on $(\CC^\ast)^n$ is free 
and $((\CC^\ast)^n)^{(\check{T})} \neq \emptyset$,
for any $\sigma \in T$ there exists a unique $\underline{\lambda}= \underline{\lambda}(\sigma) \in G_f$
such that $(\underline{\lambda}(\sigma), \sigma) \in \check{T}$.
(If $\underline{\lambda}'\underline{x}=\underline{\lambda}''\underline{x}=\underline{x}$
for $\underline{x}\in \left(\CC^*\right)^n$, then 
$(\lambda_1'x_{\sigma^{-1}(1)}, \ldots, \lambda_n'x_{\sigma^{-1}(n)})=
(\lambda_1''x_{\sigma^{-1}(1)}, \ldots, \lambda_n''x_{\sigma^{-1}(n)})$,
what is impossible for $\underline{\lambda}'\ne\underline{\lambda}''$
since the action of $G_f$ on $\left(\CC^*\right)^n$ is free.)
One has $(\underline{\lambda}(\delta), \delta)\cdot(\underline{\lambda}(\sigma), \sigma)
=(\underline{\lambda}(\delta)\cdot \delta(\underline{\lambda}(\sigma)), \delta\sigma)$
and therefore
\begin{equation}\label{eqn:lambda_product}
 \underline{\lambda}(\delta\sigma)=\underline{\lambda}(\delta)\delta(\underline{\lambda}(\sigma))\,.
\end{equation}
Let $\underline{x} \in ((\CC^\ast)^n)^{(\check{T})}$,
i.~e.\ $(\underline{\lambda}(\sigma), \sigma) \underline{x} = \underline{x}$ for all $\sigma \in T$.
We shall show that there exists a $\underline{\mu} \in G_f$ such that
\begin{equation} \label{cond:mu}
\underline{\mu} \cdot \underline{y} \in ((\CC^\ast)^n)^{\check{T}} \mbox{ for all }
\underline{y}=(y_1, \ldots , y_n) \in ((\CC^\ast)^n)^T\, .
\end{equation} 
Here $\underline{y} \in ((\CC^\ast)^n)^T$ means that $y_{\sigma^{-1}(i)} = y_i$, $i=1, \ldots, n$,
for all $\sigma \in T$.

The condition $\underline{\mu} \cdot \underline{y} \in ((\CC^\ast)^n)^{\check{T}}$,
i.~e.\ $(\underline{\lambda}(\sigma), \sigma)\underline{\mu} \cdot \underline{y}=
\underline{\mu} \cdot \underline{y}$, means that
\begin{equation} \label{eqn:mu}
\underline{\lambda}(\sigma)\sigma(\underline{\mu})=\underline{\mu}\,.
\end{equation} 
This will imply that $((\CC^\ast)^n)^{(\underline{\mu}^{-1} \check{T} \underline{\mu})}$ is contained in
$((\CC^\ast)^n)^{\{ e \}Ê\times T}$ and, in particular,
$\underline{\mu} \cdot \underline{x} \in ((\CC^\ast)^n)^{(\{ e \}Ê\times T)}$,
i.~e.\ $\{ e \}Ê\times T \in [\check{T}]$.

It is sufficient to prove the existence of $\underline{\mu}$ for a set of blocks from
the same $T$-orbit. The action of the group $T$ on blocks of the polynomial $f$ was described above. 
Each element of $T$ sends a block of any type to an isomorphic block. Moreover, if a block is of loop type
with a rotation (i.~e.\ there are some elements of $T$ which rotate the block), then any element of $T$ sends
it to an isomorphic block with a rotation. According to the description above,
an orbit of $T$ consists either of blocks which are permuted (blocks of the first type)
or of blocks (of loop type) which are permuted with a rotation (blocks of the second type).
Let us consider these two cases.

Case 1. Suppose that
the orbit consists of blocks $B_\alpha$, $\alpha \in A$, of the first type. Here $A$ is
a certain indexing set. The group $G_{\boxplus B_{\alpha}}$ of the diagonal symmetries of the direct sum of
these blocks is the direct sum of the groups $G_{B_\alpha}$ of symmetries of the block $B_\alpha$ (all of
them are isomorphic).
Let us denote an element $\underline{\lambda} \in G_{\boxplus B_{\alpha}}$ by $\{ \underline{\lambda}_\alpha \}$
with $\underline{\lambda}_\alpha \in G_{B_\alpha}$. 
Let $(\CC^\ast)_{\boxplus B_{\alpha}}$ be the torus corresponding to the variables of the blocks
$B_\alpha$, $\alpha \in A$.
We shall also write points $\underline{y} \in (\CC^\ast)_{\boxplus B_{\alpha}}$ as
$\{ \underline{y}_\alpha \}$. 
Let us fix $\alpha_0 \in A$. A point $\underline{y} \in (\CC^\ast)_{\boxplus B_{\alpha}}$
invariant with respect to the $T$-action is determined by
its part $\underline{y}_{\alpha_0}$. The condition (\ref{eqn:mu}) can be written as
\begin{equation}\label{eqn:arbitrary_alpha}
\underline{\lambda}_{\alpha}(\sigma)\underline{\mu}_{\sigma^{-1}(\alpha)}=\underline{\mu}_{\alpha}
\end{equation}
for all $\alpha$. For $\alpha=\alpha_0$ we have 
\begin{equation}\label{eqn:alpha_0}
\underline{\lambda}_{\alpha_0}(\sigma)\underline{\mu}_{\sigma^{-1}(\alpha_0)}=\underline{\mu}_{\alpha_0}\,.
\end{equation}
Thus we can take $\underline{\mu}_{\alpha_0}= \underline{1}$,
\begin{equation}\label{eqn:def_mu_1}
 \underline{\mu}_{\sigma^{-1}(\alpha_0)} = \underline{\lambda}_{\alpha_0}(\sigma)^{-1}.
\end{equation}
We have to check (\ref{eqn:arbitrary_alpha}) for arbitrary $\alpha=\delta(\alpha_0)$.
Substituting $\underline{\mu}$ from (\ref{eqn:def_mu_1}) we get
$\underline{\lambda}_{\delta(\alpha_0)}(\sigma)\cdot\left(\underline{\lambda}_{\alpha_0}(\delta\sigma)\right)^{-1}=
\left(\underline{\lambda}_{\alpha_0}(\delta)\right)^{-1}$,
i.~e.\ $\underline{\lambda}_{\alpha_0}(\delta\sigma)=
\underline{\lambda}_{\alpha_0}(\delta)\cdot\underline{\lambda}_{\delta(\alpha_0)}(\sigma)\cdot$.
This is a particular case of (\ref{eqn:lambda_product}).

Case 2. Suppose that
the orbit consists 
of blocks $B_\alpha$, $\alpha \in A$, of the second type (all of them are loops).
The coordinates in the block $B_{\alpha}$
will be denoted by $x_{\alpha,i}$. For a fixed $\alpha_0 \in A$, the subgroup $T_{\alpha_0}$ of $T$ sending
the block $B_{\alpha_0}$ to itself acts on this block in the following way. We shall omit the index $\alpha_0$,
i.~e.\ we shall denote $x_{\alpha_0,i}$ by $x_i$ for short. The block $B_{\alpha_0}$ is of the form
\begin{equation}
\sum_{i=1}^{k\ell} x_i^{p_i}x_{i+1},
\end{equation}
where the index $i$ is considered modulo $k\ell$ and $p_i=p_{i+\ell}$
for all $i=1, \ldots , k\ell$. An orbit of the action of the group
$T_{\alpha_0}$ on the variables $x_1, \ldots , x_{k\ell}$ consists of all $x_i$ with $i$ congruent to each other
modulo $\ell$. An element $\underline{y}$ invariant with respect to the action of $T_{\alpha_0}$ is determined
by its coordinates $y_1, \ldots , y_{\ell}$. Let us first define the components $\mu_i$ of $\underline{\mu}$
for $i=1 , \ldots , k\ell$. 
Let $\sigma \in T_{\alpha_0}$ be the rotation $i \mapsto i+\ell$.
Let $\lambda_i[s]:=\lambda_i(\sigma^s)$ for $i=1, \ldots, , k\ell$ and $s=0,1, \ldots , k-1$.
A point $(y_1, \ldots , y_{k\ell})$
is fixed with respect to $\check{T} \cap \pi^{-1}(T_{\alpha_0})$ if 
\begin{equation} \label{eq:**}
\lambda_i[s] y_{i-s\ell} = y_i \quad \mbox{for all integers }i \mbox{ taken modulo }\ell.
\end{equation}
Let $\lambda_i:=\lambda_i[1]$, $P:=p_1 \cdots p_\ell$. The fact that $\underline{\lambda} \in G_{B_{\alpha_0}}$
is equivalent to $\lambda_i^{p_i} \lambda_{i+1}=1$ for all $i$ taken modulo $\ell$.
The condition (\ref{cond:mu})  on $\underline{\mu}$ is 
$\lambda_i[s]\mu_{i-s\ell} y_i=\mu_iy_i$ for $i=1,\ldots, k\ell$, $s=1, \ldots, k$.
It is sufficient to have $\lambda_i[1]\mu_{i-\ell}=\mu_i$ for $i=1,\ldots, k\ell$.
This follows from the equation
\begin{equation}
\lambda_i[s]=\lambda_i[1]\lambda_{i-\ell}[1] \cdots \lambda_{i-(s-1)\ell}[1].
\end{equation}

The condition that $\underline{\mu} \in G_{B_{\alpha_0}}$ means that
\begin{equation} \label{eq:*}
\mu_1^{p_1}\mu_2 = \mu_2^{p_2}\mu_3 = \cdots = \mu_\ell^{p_\ell}\mu_{\ell+1} = \mu_{\ell+1}^{p_1}\mu_{\ell+2} =
\cdots  = \mu_{k\ell}^{p_\ell}\mu_1=1.
\end{equation}
In particular, $\mu_{\ell+1} = \mu_1^{(-1)^\ell P}$. On the other hand,
$\lambda_{\ell+1} \mu_1 =  \mu_{\ell +1}$. Thus 
\begin{equation} \label{eq:****}
\mu_1^{(-1)^\ell P-1} = \lambda_{\ell+1}.
\end{equation}
Therefore $\mu_1$ should be a root of degree $((-1)^\ell P -1)$ of $\lambda_{\ell+1}$. Let us define $\mu_1$ as
any fixed root of this type. The equations (\ref{eq:*}) ($\mu_i^{p_i} \mu_{i+1}=1$) for $i=1, \ldots , k\ell-1$
determine all the components  $\mu_i$ of $\underline{\mu}$ as powers of $\mu_1$. In particular, they give
\begin{equation}
\mu_{k\ell}^{p_\ell} = \mu_1^{(-1)^{k\ell-1} P^k}.
\end{equation}
One has to verify the last equation
\begin{equation} \label{eq:***}
\mu_1^{1-(-1)^{k\ell} P^k} = 1.
\end{equation}
The conditions (\ref{eq:**}) imply that
\begin{equation}
\lambda_{\ell+1} \cdots \lambda_{(k-1)\ell+1}\lambda_1=1, \mbox{ i.~e.\ } \lambda_{\ell+1} =
(\lambda_{2\ell+1} \cdots \lambda_{(k-1)\ell+1}\lambda_1)^{-1}.
\end{equation}
The conditions $\lambda_i^{p_i}\lambda_{i+1} =1$ ($\underline{\lambda} \in G_{B_{\alpha_0}}$) imply that
\begin{eqnarray*}
\lambda_{2\ell+1} & = & \lambda_{\ell+1}^{(-1)^\ell P}, \\
\lambda_{3\ell+1} & = & \lambda_{\ell+1}^{(-1)^{2\ell} P^2}, \\
\vdots & \vdots & \vdots \\
\lambda_{(k-1)\ell+1} & = & \lambda_{\ell+1}^{(-1)^{(k-2)\ell} P^{k-2}}, \\
\lambda_1 & = & \lambda_{\ell+1}^{(-1)^{(k-1)\ell} P^{k-1}}.
\end{eqnarray*}
Therefore we have
\begin{equation}
\lambda_{\ell+1}^{1+(-1)^\ell P + (-1)^{2\ell} P^2 + \cdots + (-1)^{(k-2)\ell} P^{k-2} + (-1)^{(k-1)\ell} P^{k-1}}=1.
\end{equation} 
Together with (\ref{eq:****}) this gives (\ref{eq:***}).

Just as in Case~1, we must have (\ref{eqn:arbitrary_alpha}) and thus (\ref{eqn:alpha_0}).
This means that for an arbtrary $\alpha$ we shall define $\underline{\mu}$ by
$\underline{\mu}_{\sigma^{-1}(\alpha_0)} =
\underline{\lambda}_{\alpha_0}(\sigma)^{-1}\underline{\mu}_{\alpha_0}$.
The condition (\ref{eqn:arbitrary_alpha}) for an arbitrary $\alpha$ is obtained literally as in Case~1.
\end{proof}

\section{Condition PC} \label{sect:PC}
Let $f$ be an invertible polynomial invariant with respect to a subgroup $S \subset S_n$. 
In Section~\ref{sect:main}, it will be shown that
$\overline{\chi}^{\widehat{G}_f}(V_{f})\in A^\rtimes(G_f\rtimes S)$ (and similarly
$\overline{\chi}^{\widehat{G}_{\widetilde{f}}}(V_{\widetilde{f}}) \in A^\rtimes(G_{\widetilde{f}} \rtimes S)$).
If $S=\{ e \}$, one has the equivariant Saito duality
\begin{equation}          
  \overline{\chi}^{\widehat{G}_f}(V_{f})
  =(-1)^n D_{\widehat{G}_{\widetilde{f}}}^{\rtimes} \overline{\chi}^{\widehat{G}_{\widetilde{f}}}(V_{\widetilde{f}})\,. \label{eq:Saito}
\end{equation}
Let us show that Equation~(\ref{eq:Saito}) does not hold in general for $S \neq \{ e \}$.

\begin{example} Let 
\[
f(\underline{x})=\widetilde{f}(\underline{x})=x_1^m+x_2^m+x_3^m+x_4^m,
\quad S= \langle(12)(34), (13)(24) \rangle \cong \ZZ_2 \times \ZZ_2.
\]
The group $G_f (\cong G_{\widetilde{f}})$ is isomorphic to $\ZZ_m^4$. The isotropy subgroup of a point
with respect to the
$G_f \rtimes S$-action on the Milnor fibre $V_f$ is conjugate to $\ZZ_m^r \rtimes T$, where $T$ is one
of the subgroups $S$, $\{ e \}$, $\ZZ_2'$, $\ZZ_2''$, $\ZZ_2'''$ of the group $S$ ($\ZZ_2'$, $\ZZ_2''$, and
$\ZZ_2'''$ are the cyclic subgroups of $S$ of order 2), $0 \leq r \leq 3$. 
Here $r < 4$, since the action of $\ZZ_m^4$ on 
$V_f$ is effective.
We shall compute the coefficients
of $[G_f \rtimes S/\ZZ_m^r \rtimes T]$ in $\overline{\chi}^{\widehat{G}_f}(V_{f})$ with $r=0$. Only these
summands can be Saito dual to the summand $-[G_{\widetilde{f}} \rtimes S/
G_{\widetilde{f}} \rtimes S ]$ in $\overline{\chi}^{\widehat{G}_{\widetilde{f}}}(V_{\widetilde{f}})$.
The coefficient of $[G_f \rtimes S/\{ e \}\rtimes T]$ in $\overline{\chi}^{\widehat{G}_f}(V_{f})$
is equal to the Euler characteristic of the 
quotient space $V_f^{([\{e\} \rtimes T])}/G_f \rtimes S$ (see Equation~(\ref{eq:equiv_chi}) for the equivariant
Euler characteristic).

The isotropy subgroups conjugate to $\{ e\} \rtimes T$ can be met only for points from $V_f \cap (\CC^\ast)^4$.
The points with the isotropy subgroup $\{ e \} \rtimes S$ are of the form $(x,x,x,x)$. There are $m$ of them.
The points with the isotropy subgroup $\check{S}$ conjugate to $\{ e \} \rtimes S$ are obtained
from these ones by the action of the group $G_f \cong \ZZ_m^4$
since $\underline{\mu}^{-1} \check{S} \underline{\mu} = \{ e \} \rtimes S$ for some
$\underline{\mu} \in G_f$. 
Thus there are $m^4$ of them
($m$ elements of $G_f$ permute the $m$ points described above). Therefore the coefficient of
$[G_f \rtimes S/\{e\} \rtimes S]$ in $\overline{\chi}^{\widehat{G}_f}(V_{f})$ is equal to 1.

The points with the isotropy subgroup $\{ e \} \rtimes \ZZ_2'$ with $\ZZ_2'=\langle (12)(34) \rangle$ are
of the form $(x,x,y,y)$ with $x \neq y$. The Euler characteristic of the space
$\{ \underline{x} \in V_f \cap (\CC^\ast)^4 \, : \, x_1=x_2, x_3=x_4 \}$ is equal to $-m^2$. Therefore the Euler
characteristic of the union of the shifts of this set by the elements of the group $G_f$ is equal to $-m^4$
($m^2$ elements of $G_f$ leave this set invariant). The Euler characteristic of $V_f^{([\{ e\} \rtimes \ZZ_2'])}$
is obtained from this one by subtracting the Euler characteristic of $V_f^{([\{e\} \rtimes S])}$ (equal to $m^4$)
and thus is equal to $-2m^4$. Therefore the coefficient of $[G_f \rtimes S/\{ e \} \rtimes \ZZ_2']$ in
$\overline{\chi}^{\widehat{G}_f}(V_{f})$ is equal to $-1$. The same holds for the coefficients of
$[G_f \rtimes S/\{ e \} \rtimes \ZZ_2'']$ and $[G_f \rtimes S/\{ e \} \rtimes \ZZ_2''']$.

In the same way one can show that the coefficient of $[G_f \rtimes S/\{ e \} \rtimes \{ e \}]$ is equal to 1.
Thus all these terms sum up to 
\begin{eqnarray*}
\lefteqn{[G_f \rtimes S/\{ e \} \rtimes S] - [G_f \rtimes S/\{ e \} \rtimes \ZZ_2'] -
[G_f \rtimes S/\{ e \} \rtimes \ZZ_2'']}\\
& &  - [G_f \rtimes S/\{ e \} \rtimes \ZZ_2'''] +[G_f \rtimes S/\{ e \} \rtimes \{ e \}].
\end{eqnarray*}
The first term in this expression is Saito dual to $[G_{\widetilde{f}} \rtimes S/G_{\widetilde{f}} \rtimes S]$
(though with the non-expected sign),
whereas the other terms do not have dual counterparts. The reason for these terms not to vanish is the following one.
For an invertible polynomial $f$ in $n$ variables with the symmetry group $G_f \rtimes S$, the sign of the Euler
characteristic of the set $(V_f \cap (\CC^\ast)^n)^T$ ($T \subset S$) is $(-1)^{\dim (\CC^n)^T-1}$.
This follows, e.~g., from the formula of \cite[Theorem (7.1)]{Varch}
(see Equation~(\ref{eq:Varchenko}) below).
\end{example}

In the example above $\dim (\CC^n)^S =1$ is odd while $\dim (\CC^n)^{\ZZ_2'}=2$ 
and $\dim (\CC^n)^{\{e\}}=0$ are even. In a vague way, one can say that
if the latter dimension would be odd as well, the Euler characteristic of $V_f^{([\{ e\} \rtimes \ZZ_2'])}$
would vanish (being equal to $m^4-m^4$). This gives a hint that the Saito duality for dual invertible polynomials
can only hold if the dimensions of the subspaces $(\CC^n)^T$ for all subgroups $T \subset S$ have the same parity.

\begin{remark}
We might present a simpler example with $f(\underline{x})=x_1^m+x_2^m$, $S=\langle (12) \rangle$, however, in this
case the group $S$ is not contained in ${\rm SL}(2, \CC)$ what is the usual condition in mirror symmetry.
\end{remark}


The example above and the discussion of it motivate the following condition on the action of the group $S$.

\begin{definition}
 We say that a subgroup $S\subset S_n$ satisfies the {\em parity condition} (``PC'' for short) if for each subgroup
 $T\subset S$ one has
 $$
 \dim (\CC^n)^T\equiv n\ \ \mod 2\,,
 $$
 where $(\CC^n)^T=\{\underline{x}\in \CC^n: \sigma \underline{x}= \underline{x} \text{ for all } \sigma\in T\}$.
\end{definition}

\begin{proposition}\label{prop:A_n}
 A subgroup $S\subset S_n$ satisfying PC is contained in the alternating group $A_n\subset S_n$. 
\end{proposition}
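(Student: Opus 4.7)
The plan is to apply the PC condition only to cyclic subgroups and to translate the dimension of the fixed space into a statement about the cycle type. Concretely, for any $\sigma \in S$ I would take $T=\langle \sigma \rangle$ and observe that
\[
\dim (\CC^n)^T = \#\{\text{orbits of } \langle \sigma \rangle \text{ on } \{1,\ldots,n\}\} = k(\sigma),
\]
where $k(\sigma)$ is the number of cycles (fixed points included) in the cycle decomposition of $\sigma$. This is just the standard identification of the fixed subspace of a permutation matrix: a coordinate vector $\underline{x}$ is fixed by $\sigma$ iff $x_i = x_{\sigma(i)}$ for all $i$, so the fixed space has one dimension per cycle.

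Next I would invoke the classical formula for the sign of a permutation in terms of its cycle lengths $\ell_1,\ldots,\ell_{k(\sigma)}$, namely
\[
\mathrm{sgn}(\sigma) \;=\; \prod_{i=1}^{k(\sigma)}(-1)^{\ell_i-1} \;=\; (-1)^{\,n - k(\sigma)},
\]
using $\ell_1+\cdots+\ell_{k(\sigma)}=n$. The PC condition applied to the cyclic subgroup $T=\langle \sigma\rangle$ gives $k(\sigma)\equiv n \pmod 2$, i.e.\ $n-k(\sigma)$ is even, i.e.\ $\mathrm{sgn}(\sigma)=+1$. Since $\sigma \in S$ was arbitrary, every element of $S$ is even, so $S\subset A_n$.

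There is no real obstacle: the only nontrivial ingredient is the identification of $\dim(\CC^n)^{\langle\sigma\rangle}$ with the cycle count, after which the parity condition is literally the evenness of $\sigma$. Note that only the condition for cyclic subgroups is used; the full strength of PC (for arbitrary $T\subset S$) is not needed for this proposition.
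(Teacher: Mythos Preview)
Your proof is correct and follows exactly the same route as the paper: apply PC to the cyclic subgroup $\langle\sigma\rangle$, identify $\dim(\CC^n)^{\langle\sigma\rangle}$ with the number of cycles of $\sigma$, and read off that $\sigma$ is even. The paper compresses the last step into ``This yields the statement,'' while you spell out the sign formula $\mathrm{sgn}(\sigma)=(-1)^{n-k(\sigma)}$ explicitly, but the argument is the same.
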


\begin{proof}
 Let $\sigma$ be an element of $S$. The dimension of the subspace $(\CC^n)^{\langle\sigma\rangle}$
 ($\langle\sigma\rangle$ is the (cyclic) subgroup generated by $\sigma$) is equal to the number of
 cycles in the decomposition of the permutation $\sigma$ into disjoint cycles. This yields the statement.
\end{proof}

\begin{examples}
{\bf 1.} The proof of Proposition~\ref{prop:A_n} implies that a cyclic subgroup of $S_n$ satisfies PC if
and only if its generator (and therefore each element) is an even permutation. In particular, the subgroup
$A_3\subset S_3$ satisfies PC.

{\bf 2.} The subgroup $A_4\subset S_4$ does not satisfy PC since $\dim(\CC^4)^{A_4}=1$. This implies that,
for $n\ge 4$, the subgroup $A_n\subset S_n$ does not satisfy PC (since $A_n$ contains the subgroup $A_4$
permuting the first four coordinates).

{\bf 3.} The group $A_4$ contains the subgroup $S=\langle (12)(34), (13)(24) \rangle$ isomorphic to
$\ZZ_2 \times \ZZ_2$. The space $(\CC^4)^S$ is one-dimensional and therefore the subgroup $S$ does not satisfy PC.
(This was used in the example above.)

{\bf 4.} The group $\langle (12345), (12)(34) \rangle \subset S_5$ coincides with $A_5$ and therefore does not
satisfy PC. The group $\langle (12345), (14)(23) \rangle \subset A_5$ is isomorphic to the dihedral group
$D_{10}$ and satisfies PC.
\end{examples}

\section{Non-abelian Saito duality for invertible polynomials} \label{sect:main}

Let $f$, $\widetilde{f}$ and $S$ be as in Section~\ref{sect:Euler_Milnor} and also let
${\widehat{G}}_f=G_f\rtimes S$, ${\widehat{G}}_{\widetilde{f}}=G_{\widetilde{f}}\rtimes S$.
We shall first show that the equivariant Euler characteristic of the Milnor fibre $V_f$
with the $G_f \rtimes S$-action belongs to the subgroup $A^\rtimes(G_f\rtimes S) \subset A(G_f\rtimes S)$.

\begin{proposition} \label{prop:Burnside}
One has $\overline{\chi}^{\widehat{G}_f}(V_{f})\in A^\rtimes(G_f\rtimes S)$ $($and similarly 
$\overline{\chi}^{\widehat{G}_{\widetilde{f}}}(V_{\widetilde{f}}) \in
A^\rtimes(G_{\widetilde{f}} \rtimes S)${$)$}.
\end{proposition}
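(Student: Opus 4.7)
The plan is to show that for every $x\in V_f$ the isotropy subgroup $K_x\subset \widehat{G}_f = G_f\rtimes S$ is conjugate in $\widehat{G}_f$ to a subgroup of semidirect form $H\rtimes T$ with $H\subset G_f$ and $T\subset S$ preserving $H$ via $\varphi$. By formula (\ref{eq:equiv_chi}) this implies $\chi^{\widehat{G}_f}(V_f)\in A^\rtimes(G_f\rtimes S)$; the reduced version follows because $[\widehat{G}_f/\widehat{G}_f]$ already has this form (with $H=G_f$, $T=S$).

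Partition $V_f$ by coordinate support: for $I\subset \{1,\ldots,n\}$, set $V_f^I = \{x\in V_f: \mathrm{supp}(x)=I\}$. For $x\in V_f^I$, the $G_f$-part of $K_x$ equals $G_f(I) := \{\underline{\lambda}\in G_f : \lambda_i=1 \text{ for all } i\in I\}$, since $G_f$ acts freely on the $I$-torus modulo $G_f(I)$; and the permutation image $T := \pi(K_x)$ is contained in the stabilizer $S_I\subset S$ of $I$ (because $\sigma V_f^I = V_f^{\sigma(I)}$). Thus $T$ preserves $G_f(I)$ via $\varphi$ and the candidate subgroup $G_f(I)\rtimes T$ is well defined. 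When $I=\{1,\ldots,n\}$, $G_f(I)$ is trivial and Proposition~\ref{prop:exT} yields directly that $K_x$ is conjugate to $\{e\}\times T$. For $I\subsetneq \{1,\ldots,n\}$, conjugating $K_x$ to $G_f(I)\rtimes T$ by some $\underline{\mu}\in G_f$ amounts to finding $\underline{\mu}$ satisfying equation (\ref{eqn:mu}) modulo $G_f(I)$: for each $\sigma\in T$ and each lift $(\underline{\lambda}(\sigma),\sigma)\in K_x$ one needs $\underline{\mu}^{-1}\underline{\lambda}(\sigma)\sigma(\underline{\mu})\in G_f(I)$.

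The construction of $\underline{\mu}$ proceeds block by block along $T$-orbits of the atomic summands of $f$, following Cases~1 and~2 of the proof of Proposition~\ref{prop:exT}, but with the governing equations retained only on the $I$-coordinates. The Case~1 recursion $\underline{\mu}_{\sigma^{-1}(\alpha_0)} = \underline{\lambda}_{\alpha_0}(\sigma)^{-1}\underline{\mu}_{\alpha_0}$ and its verification transfer unchanged, since the cocycle identity (\ref{eqn:lambda_product}) still holds modulo $G_f(I)$; components of $\underline{\mu}$ lying in blocks entirely outside $I$ may be chosen arbitrarily subject to $\underline{\mu}\in G_f$. The main obstacle is the mixed situation in Case~2, where a single loop block has some variables in $I$ and others outside: one must verify that the root extraction (\ref{eq:****}) for $\mu_1$ together with the closing consistency identity (\ref{eq:***}) remain solvable once the ambient equations are imposed only on the $I$-coordinates. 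Since dropping the constraints on $I^c$ only enlarges the solution set, the argument of Proposition~\ref{prop:exT} adapts with careful bookkeeping of how the rotation $i\mapsto i+\ell$ distributes the block variables between $I$ and $I^c$; assembling the $\underline{\mu}$-data over all $T$-orbits of blocks produces the desired conjugating element in $G_f$, completing the proof.
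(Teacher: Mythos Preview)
Your approach differs from the paper's, and the divergence is precisely where the argument becomes incomplete. The paper does not attempt to show that \emph{every} isotropy subgroup on $V_f$ is conjugate to a semidirect product. Instead it decomposes $V_f$ into the torus pieces $V_f\cap(\CC^*)^I$ and treats two cases. When $f^I$ is itself an invertible polynomial in the $|I|$ variables, Proposition~\ref{prop:exT} is applied to $f^I$ (not to $f$): since $G_f^I$ acts trivially on $(\CC^*)^I$ and $G_f/G_f^I\cong G_{f^I}$, the isotropy groups in $G_f\rtimes S^I$ are conjugate to $G_f^I\rtimes T$. When $f^I$ has fewer than $|I|$ monomials---exactly the situation where a block is ``mixed'' in your sense---the paper does \emph{not} analyse isotropy at all; it invokes Lemma~\ref{lem:degenerate}, which produces a free $\CC^*$-action on $V_f\cap(\CC^*)^I$ commuting with $G_f\rtimes S^I$, so the equivariant Euler characteristic of that piece vanishes and contributes nothing to $\chi^{\widehat G_f}(V_f)$.

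Your proposal replaces this vanishing argument with a direct adaptation of the block-by-block construction of $\underline{\mu}$ from Proposition~\ref{prop:exT}, but the step ``the argument adapts with careful bookkeeping'' is not a proof. The slogan ``dropping the constraints on $I^c$ only enlarges the solution set'' is correct as stated, yet it does not suffice: the explicit construction of $\underline{\mu}$ in Case~2 uses the full data $\underline{\lambda}(\sigma)$ on \emph{all} coordinates of the loop (e.g.\ the value $\lambda_{\ell+1}$ in \eqref{eq:****} and the product identity leading to \eqref{eq:***}), whereas for a mixed loop only the $I$-components of $\underline{\lambda}(\sigma)$ are determined and the others vary over a coset of $G_f^I$. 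One therefore cannot simply quote Proposition~\ref{prop:exT}; the root-extraction and closure identities would have to be re-derived under this indeterminacy, and you have not done so. Nor is it evident a priori that the resulting cocycle $T\to G_f/G_f^I$ is always a coboundary: the obstruction lives in $\ker\bigl(H^1(T,G_f|_I)\to H^1(T,(\CC^*)^I)\bigr)$, and you give no argument that it vanishes in the degenerate case. The paper's route via Lemma~\ref{lem:degenerate} sidesteps this issue entirely.
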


The proof of Proposition~\ref{prop:Burnside} will be postponed. Now we are ready to state
our main result.

\begin{theorem} \label{thm:main}
 If the subgroup $S\subset S_n$ satisfies PC, then one has
 \begin{equation}
   \overline{\chi}^{G_f \rtimes S}(V_{f})
  =(-1)^n D_{G_{\widetilde{f}}\rtimes S}^{\rtimes}\overline{\chi}^{G_{\widetilde{f}}\rtimes S}(V_{\widetilde{f}})\,. 
 \end{equation}
\end{theorem}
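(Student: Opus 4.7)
The plan is to split both sides of the equation according to the conjugacy class in $S$ of the permutation part of the isotropy, and then to match the resulting pieces using the abelian equivariant Saito duality of \cite{EG-BLMS} applied to the restriction $f^T:=f|_{(\CC^n)^T}$; the parity condition PC will force the resulting signs $(-1)^{n_T}$ (with $n_T=\dim(\CC^n)^T$) all to equal $(-1)^n$. More concretely, using Proposition~\ref{prop:Burnside} together with the natural stratification of $V_f$ by the coordinate tori $(\CC^*)^I$, I first split
\[
\overline{\chi}^{\widehat{G}_f}(V_f)=\sum_{[T]\in{\rm Conjsub\,}S}C_f([T]),
\]
where $C_f([T])\in A^\rtimes(G_f\rtimes S)$ is supported on the basis elements $[\widehat{G}_f/H\rtimes T]$. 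By Proposition~\ref{prop:exT} applied on each coordinate sub-torus, after the $\widehat{G}_f$-action the contributing points of $V_f$ lie on $V_f^T=V_f\cap(\CC^n)^T$ and its analogues on each stratum. Since the isomorphism $D^\rtimes_{\widehat{G}_f}$ preserves the decomposition by $[T]$, it suffices to prove $C_f([T])=(-1)^n D^\rtimes_{\widehat{G}_{\widetilde{f}}}C_{\widetilde{f}}([T])$ for each $T\subset S$ separately.

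For fixed $T$, I would identify $f^T$ with an invertible polynomial in $n_T$ variables using the block description of Section~\ref{sect:Euler_Milnor}: a $T$-orbit of first-type blocks collapses to a single isomorphic block, and a $T$-orbit of second-type loop blocks collapses to a shorter loop with the same periodic exponents. Standard compatibility of the Berglund--H\"ubsch transpose with this reduction then yields $\widetilde{f^T}=(\widetilde{f})^T$, and for every $T$-invariant subgroup $H\subset G_f$ the equality $\widetilde{H^T}=(\widetilde{H})^T$ inside $G_{f^T}^*=(G_f^*)^T$. The residual Weyl-type group $W:=N_S(T)/T$ acts on $V_{f^T}$ by the leftover permutation symmetries; a further stratification by the conjugacy class of the $W$-isotropy, iterated until the permutation part becomes trivial (equivalently, a short induction on $|S|$), lets one write $C_f([T])$ as the induction from $A(G_{f^T})$ of a reduced equivariant Euler characteristic living in the purely abelian Burnside ring of $G_{f^T}$.

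With these identifications in place, the abelian equivariant Saito duality of \cite{EG-BLMS} gives
\[
\overline{\chi}^{G_{f^T}}(V_{f^T})=(-1)^{n_T}D_{G_{f^T}}\overline{\chi}^{G_{\widetilde{f^T}}}(V_{\widetilde{f^T}}),
\]
which together with the commutativity of $D^\rtimes$ with induction (the diagram in Section~\ref{sect:NC-Saito}) yields $C_f([T])=(-1)^{n_T}D^\rtimes_{\widehat{G}_{\widetilde{f}}}C_{\widetilde{f}}([T])$. The PC hypothesis forces $n_T\equiv n\pmod 2$ for every $T\subset S$, so $(-1)^{n_T}=(-1)^n$ uniformly, and summing over $[T]$ proves the theorem. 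The main obstacle is the second step: cleanly writing $C_f([T])$ as an induction of an abelian equivariant Euler characteristic, which requires careful book-keeping of the residual $W$-action on $V_{f^T}$ and verification of the identities $\widetilde{f^T}=(\widetilde{f})^T$ and the compatibility of character-dual subgroups with restriction to the $T$-invariants $G_f^T$, using the block-by-block analysis from the proof of Proposition~\ref{prop:exT}.
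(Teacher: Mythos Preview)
Your decomposition by the $S$-part $[T]$ of the isotropy is a genuinely different route from the paper's, but the key reduction step has a real gap. You claim that $C_f([T])$ can be written as an induction from $A(G_{f^T})$; this is not correct. On a $T$-invariant coordinate stratum $(\CC^*)^I$ the isotropy of a point with $S$-part exactly $T$ is (conjugate to) $G_f^I\rtimes T$, and the diagonal factor $G_f^I=\{\underline{\lambda}\in G_f:\lambda_i=1\text{ for }i\in I\}$ is in general \emph{not} a subgroup of $G_{f^T}=G_f^T$. For instance take $f=x_1^5+\cdots+x_5^5$, $S=T=\langle(123)\rangle$ (so no $W$-iteration is needed at all) and $I=\{4\}$: then $G_f^{\{4\}}=\{(\lambda_1,\lambda_2,\lambda_3,1,\lambda_5)\}$ whereas $G_f^T=\{(\lambda,\lambda,\lambda,\mu,\nu)\}$, so $G_f^{\{4\}}\not\subset G_f^T$. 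Hence no element of $A(G_{f^T})$ can induce (in the usual sense) to the basis element $[G_f\rtimes S/G_f^{\{4\}}\rtimes T]$ that genuinely occurs in $C_f([T])$. Your $W$-book-keeping only adjusts coefficients; it cannot change the shape of the isotropy subgroups. What would actually be needed is an ad hoc correspondence $(G_{f^T})^{J}\leftrightarrow G_f^I$ (with $J=I/T$) together with a verification that it intertwines the two Saito dualities and that the numerical coefficients match---but carrying this out amounts to redoing the $I\leftrightarrow\overline{I}$ analysis stratum by stratum, which is exactly what the paper does.

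The paper's argument decomposes $V_f$ by the coordinate tori $(\CC^*)^I$ rather than by $[T]$, and pairs the $I$-piece for $f$ with the $\overline{I}$-piece for $\widetilde f$. For $I=I_0$ an inclusion--exclusion (Lemmas~\ref{lem:19}--\ref{lem:20}) shows, using PC, that only the summand with $[T]=[S]$ survives and equals $(-1)^{n-1}[G_f\rtimes S/\{e\}\rtimes S]$. For proper $I$ the coefficients $a_{[T]}$ in $\chi^{G_{f^I}\rtimes S^I}(V_{f^I}\cap(\CC^*)^I)$ are shown (Lemma~\ref{lem:25}) to depend only on the poset of subgroups $T\subset S^I$ coloured by the parity of $\dim(\CC^I)^T$; PC forces this coloured diagram to agree for $I$ and $\overline{I}$, which together with $\widetilde{G_f^I}=G_{\widetilde f}^{\,\overline I}$ gives the duality. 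So PC is used not merely through the single sign $(-1)^{n_T}=(-1)^n$ you isolate, but through the matching of the entire parity pattern across complementary strata.
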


For the proofs of Proposition~\ref{prop:Burnside} and
Theorem~\ref{thm:main} we consider a decomposition of $\CC^n$ into certain 
unions of tori.

 For a subset $I\subset I_0=\{1,2,\ldots,n\}$, let
 $\CC^I :=\{(x_1, \ldots,x_n)\in\CC^n: x_i=0 {\text{ for }} i\notin I\}$,
 $(\CC^*)^I :=\{(x_1, \ldots,x_n)\in\CC^n: x_i\ne 0  {\text{ for }} i\in I, x_i=0 {\text{ for }} i\notin I\}$. One has
 $\CC^n=\bigsqcup_{I\subset I_0}(\CC^*)^I$. Each torus $(\CC^*)^I$ is invariant with respect to the actions of
 the groups $G_f$ and $G_{\widetilde{f}}$. Let $G_f^I\subset G_f$ and $G_{\widetilde{f}}^I\subset G_{\widetilde{f}}$
 be the isotropy subgroups of these actions on $(\CC^*)^I$. (The isotropy subgroups are the same for all points of
 $(\CC^*)^I$.)
 
 The group $S$ acts on the set $2^{I_0}$ of subsets of $I_0$. For a subset $I\subset I_0$, let
 $S^I :=\{\sigma\in S: \sigma I=I\}$ be the isotropy subgroup of $I$ for the action of $S$ on $2^{I_0}$.
 One has $S^{\overline{I}}=S^I$ where $\overline{I}:=I_0 \setminus I$.
 
 One has the decomposition
 $$
 \CC^n=\bigsqcup_{{\mathfrak{I}}\in 2^{I_0}/S} \left(\bigsqcup_{I\in \mathfrak{I}}(\CC^*)^I\right)\,,
 $$
 where the unions are over the orbits $\mathfrak{I}$ of the $S$-action on $2^{I_0}$, and over the
 elements $I$ of the orbit,
 and therefore
 $$
 V_f=\bigsqcup_{{\mathfrak{I}}\in 2^{I_0}/S} 
 \left(\bigsqcup_{I\in \mathfrak{I}}\left(V_f\cap(\CC^*)^I\right)\right)\,.
 $$
 The subset $\bigsqcup_{I\in \mathfrak{I}}\left(V_f\cap(\CC^*)^I\right)\subset V_f$ is $(G_f \rtimes S)$-invariant.
 Therefore
 \begin{equation} \label{eq:union}
 \chi^{G_f\rtimes S}(V_f)=\sum_{{\mathfrak{I}}\in 2^{I_0}/S} 
 \chi^{G_f\rtimes S}\left(\bigsqcup_{I\in \mathfrak{I}}\left(V_f\cap(\CC^*)^I\right)\right)\,.
 \end{equation}
 It is easy to see that
 \begin{equation} \label{eq:Ind}
 \chi^{G_f\rtimes S}\left(\bigsqcup_{I\in \mathfrak{I}}\left(V_f\cap(\CC^*)^I\right)\right)=
 \Ind_{G_f\rtimes S^I}^{G_f\rtimes S} \chi^{G_f\rtimes S^I}\left(V_f\cap(\CC^*)^I\right)\,.
 \end{equation}
 
 For $I\subset I_0$, let $f^I:=f_{\vert\CC^I}$; for $I\subset I_0$ and for a subgroup $T\subset S^I$,
 let $f^{I,T}:=f_{\vert(\CC^I)^T}$\,. Here the set $(\CC^I)^T$ of fixed points of $T$ in the coordinate subspace
 $\CC^I$ consists of the points whose coordinates are equal if and only if they can be sent one to the other by
 an element of $T$. If $f^I$ has an isolated critical point at the origin, then $f^{I,T}$ has an isolated critical
 point as well. For short we shall denote $f^{I_0,T}$ by $f^T$.
 
 The polynomial $f^I=f_{\vert(\CC^n)^I}$ has not more than $\vert I\vert$ monomials. It has $\vert I\vert$ monomials
 if and only if the polynomial $\widetilde{f}^{\,\overline{I}}$ has $\vert \overline{I}\vert$ monomials.
We treat the case that $f^I$ has less than $\vert I\vert$ monomials in the following lemma.

\begin{lemma} \label{lem:degenerate}
With the notations above, let $f^I$ have less than $\vert I\vert$ monomials. Then
$$
\chi^{G_f\rtimes S^I}\left(V_f\cap (\CC^*)^I\right)=0 \, .
$$
\end{lemma}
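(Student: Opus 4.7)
I would prove this by producing a $\CC^*$-action on $V_f \cap (\CC^*)^I$ that commutes with the $(G_f \rtimes S^I)$-action and has only finite stabilizers on $(\CC^*)^I$. Once such an action is available, for every subgroup $K \subset G_f \rtimes S^I$ the fixed-point set $(V_f \cap (\CC^*)^I)^K$ is a $\CC^*$-invariant quasi-projective variety with no $\CC^*$-fixed points, and hence has vanishing Euler characteristic; by the definition (\ref{eq:equiv_chi}) and M\"obius inversion over the poset of subgroups this forces $\chi^{G_f \rtimes S^I}(V_f \cap (\CC^*)^I) = 0$.

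The action would come from a nonzero $S^I$-invariant vector $v \in \ker(E')$, where $E'\colon \QQ^I \to \QQ^{M^I}$ is the exponent matrix of $f^I$ (with $M^I$ the set of monomials of $f^I$), via the one-parameter subgroup $t \mapsto (t^{v_i})_{i \in I}$ of $(\CC^*)^I$. Such an action preserves each monomial of $f^I$ by construction, commutes with the diagonal group $G_f$ automatically, and commutes with $S^I$ precisely because $v$ is $S^I$-invariant; moreover any nonzero $v$ yields only finite stabilizers in $(\CC^*)^I$. The hypothesis $\vert M^I\vert < \vert I\vert$ already produces a nonzero kernel of $E'$, but a blind averaging of an arbitrary kernel element over $S^I$ may vanish, so the $S^I$-invariance takes some work.

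I would construct $v$ block-by-block using the Sebastiani--Thom decomposition $f = \boxplus_\alpha B_\alpha$. The total gap $\vert I\vert - \vert M^I\vert$ is the sum of the nonnegative block gaps $\vert J_\alpha\vert - \vert M^{J_\alpha}\vert$ with $J_\alpha = I \cap I_\alpha$, and $S^I$ permutes blocks preserving these gaps, so at least one $S^I$-orbit $O$ consists of blocks of positive gap. Fix $\alpha_0 \in O$ with stabilizer $T_{\alpha_0} \subset S^I$. For a chain block, $T_{\alpha_0}$ is trivial (chains are of the first type), so any nonzero element of $\ker(E'_{\alpha_0})$ works. For a loop block, $T_{\alpha_0}$ acts by rotations of some period $\ell$ dividing $m$, and the crucial computation is that the gap of a $T_{\alpha_0}$-invariant $J_{\alpha_0}$ in the loop of length $m$ equals $(m/\ell)$ times the gap of its image $\overline{J}_{\alpha_0}$ in the quotient loop of length $\ell$; the latter is therefore also positive and supplies a nonzero kernel vector for the quotient exponent matrix, which lifts to a $T_{\alpha_0}$-invariant nonzero $u^{(\alpha_0)} \in \ker(E'_{\alpha_0})$. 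Transporting $u^{(\alpha_0)}$ by $S^I$ along $O$ (well-defined thanks to $T_{\alpha_0}$-invariance), summing over $O$ and extending by zero on the remaining blocks yields the desired $S^I$-invariant nonzero $v$. I expect this loop-quotient reduction to be the main technical step of the argument; everything else follows routinely from it.
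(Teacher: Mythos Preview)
Your proposal is correct and follows the same overall strategy as the paper: build a one-parameter subgroup of $(\CC^*)^I$ that preserves $f^I$ and commutes with $G_f\rtimes S^I$, so that every fixed-point set has a fixed-point-free $\CC^*$-action and hence vanishing Euler characteristic. The difference lies only in how the $S^I$-invariant weight vector $v$ is produced. You work with the original atomic blocks of $f$, so a loop block $B_{\alpha_0}$ may carry a nontrivial rotation stabilizer $T_{\alpha_0}$, and you resolve this with the quotient-loop reduction (the gap of an $\ell$-periodic $J_{\alpha_0}$ is $m/\ell$ times the gap of its image in the length-$\ell$ loop). The paper instead decomposes $f^I$ itself into its atomic pieces---chains, loops, ``fragments'' $x_{i_1}^{p_1}x_{i_2}+\cdots+x_{i_{m-1}}^{p_{m-1}}x_{i_m}$, and orphan variables---and observes that the gap $\vert I\vert-\vert M^I\vert$ is exactly the number of orphans plus the number of fragments. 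In the orphan case one scales all orphan variables simultaneously; in the fragment case a single linear relation on the fragment's exponents yields a kernel vector, which is transported along its $S^I$-orbit. The key point making this transport well-defined is that a fragment (a proper contiguous arc in a loop) cannot be mapped to itself by a nontrivial rotation, so its $S^I$-stabilizer acts trivially on its variables. This observation short-circuits your quotient-loop step entirely. Your route has the virtue of staying with the block decomposition already set up for $f$, at the cost of the extra periodic-lift computation; the paper's route is shorter but requires identifying the finer block structure of $f^I$.
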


\begin{proof}
 We shall construct a free $\CC^*$-action on $(\CC^*)^I$ which preserves $f_{\vert(\CC^*)^I}$ and commutes with the
 $S$-action. The existence of such an action implies the statement of Lemma~\ref{lem:degenerate}.
 One can see that $f^I$ is the Sebastiani--Thom sum of some chains, some loops and some polynomials of the form
 \begin{equation}\label{eq:fragment}
  x_{i_1}^{p_1}x_{i_2}+x_{i_2}^{p_2}x_{i_3}+\ldots+x_{i_{m-1}}^{p_{m-1}}x_{i_m}
 \end{equation}
 (in different sets of variables).
 The group $S^I$ permutes isomorphic blocks and (possibly) rotates loops (and also permutes variables
 which do not participate in $f^I$). Moreover, either there are some variables (say, $x_{j_1}$, \dots, $x_{j_{\ell}}$,
 $\ell>0$) which do not participate in $f^I$ or there are some blocks of the form~(\ref{eq:fragment}). In the first
 case let $\lambda\in\CC^*$ act on $(\CC^*)^I$ by multiplying  all the variables $x_{j_1}$, \dots, $x_{j_{\ell}}$
 by $\lambda$ and keeping all the other variables fixed. In the second case one may have several blocks isomorphic
 to~(\ref{eq:fragment}) permuted by the group $S^I$. Let $\underline{q}=(q_1, \ldots, q_m)\in \ZZ^m\setminus\{0\}$
 be such that $q_{i_r}p_i+q_{i_{r+1}}=0$ for $1\le r\le m-1$. (Such $\underline{q}$ exists since it is defined by
 $(m-1)$ linear equations with respect to $m$ variables.) The action of $\CC^*$ on the variables $x_{i_1}$, \dots,
 $x_{i_m}$ defined by
 $$
 \lambda*(x_{i_1}, \ldots, x_{i_m})=(\lambda^{q_1}x_{i_1}, \ldots, \lambda^{q_m}x_{i_m})
 $$
 (and keeping all the other variables fixed) preserves $f^I$. If we define its action on the other $m$-tuples
 of variables obtained from these ones by the action of $S^I$ in the same way, we get a $\CC^*$-action which
 preserves $f^I$ and commutes with $S^I$.
 \end{proof}
 
 \begin{proof*}{Proof of Proposition~\ref{prop:Burnside}}
 According to Proposition~\ref{prop:exT} one has
 \begin{equation} \label{eq:torus}
  \chi^{G_f\rtimes S}(V_f\cap(\CC^*)^n)=\sum_{[T]\in{\rm Conjsub\,}S}\chi^{G_f\rtimes S}((V_f\cap(\CC^*)^n)^{([\{e\}\rtimes T])}) \in A^\rtimes(G_f\rtimes S)\,.
 \end{equation}
 Let $I \subset I_0$ be a proper subset. If $f^I$ has less than $\vert I \vert$ monomials then $\chi^{G_f\rtimes S^I}\left(V_f\cap (\CC^*)^I\right)=0$ according to Lemma~\ref{lem:degenerate}. Otherwise $f^I$ has $\vert I\vert$ monomials. In this case $f^I$ is an invertible polynomial in the variables $x_i$ with $i\in I$. Proposition~3 applied to $f^I$ implies
\begin{equation} \label{eq:torusI}
  \chi^{G_{f^I}\rtimes S^I}(V_{f^I}\cap (\CC^*)^I)=\sum_{[T]\in{\rm Conjsub\,}S^I}\chi^{G_{f^I}\rtimes S^I}((V_{f^I}\cap (\CC^*)^I)^{([\{e\}\rtimes T])})\,.
\end{equation}
The last sum lies in $A^\rtimes(G_{f^I}\rtimes S) \subset A^\rtimes(G_f\rtimes S)$. Therefore,
Equations (\ref{eq:union}), (\ref{eq:Ind}), (\ref{eq:torus}), and (\ref{eq:torusI}) imply the statement. 
 \end{proof*}
 
 For the proof of Theorem~\ref{thm:main} we need two lemmas.
 \begin{lemma} \label{lem:14} Under the imposed conditions one has
\begin{eqnarray}\label{eq:1)}
  \chi^{G_f\rtimes S}\left(V_f\cap(\CC^*)^n\right) & = & (-1)^{n-1}\left[G_f\rtimes S/\{ e \} \times S\right] \nonumber \\
 &  = & (-1)^{n-1}
 D^{\rtimes}_{G_{\widetilde{f}}\rtimes S}\left[G_{\widetilde{f}}\rtimes S/G_{\widetilde{f}}\rtimes S\right] \, .
 \end{eqnarray}
\end{lemma}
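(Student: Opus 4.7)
My plan is to pass to the free quotient $Y:=(V_f\cap(\CC^*)^n)/G_f$ and reduce the lemma to the identity $\chi^S(Y)=(-1)^{n-1}[S/S]$ in $A(S)$. Since $G_f$ acts freely on $(\CC^*)^n$, Proposition~\ref{prop:exT} ensures that the $(G_f\rtimes S)$-isotropy of every $x\in V_f\cap(\CC^*)^n$ is conjugate to $\{e\}\rtimes T_x$, and the projection $x\mapsto[x]$ sets up a bijection between $(G_f\rtimes S)$-orbits on the torus and $S$-orbits on $Y$, matching isotropy classes $[\{e\}\rtimes T]\leftrightarrow[T]$. Hence the first claimed equality is equivalent to $\chi^S(Y)=(-1)^{n-1}[S/S]$.

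To prove this identity I would compute $\chi(Y^T)$ for each $T\subset S$ and invoke the injectivity of the mark homomorphism $A(S)\hookrightarrow\prod_{[T]}\ZZ$, $[X]\mapsto(\chi(X^T))_{[T]}$. The class $(-1)^{n-1}[S/S]$ has constant fixed-point characteristic $(-1)^{n-1}$ across all subgroups (since $[S/S]^T$ is a single point for every $T$), so it suffices to prove $\chi(Y^T)=(-1)^{n-1}$ for every $T\subset S$.

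For a fixed $T\subset S$, the argument of Proposition~\ref{prop:exT} allows one to pick a genuinely $T$-invariant representative in each $G_f$-orbit of the $T$-fixed locus, which identifies $Y^T$ with $V_{f^T}^*/G_{f^T}$, where $V_{f^T}^*:=V_{f^T}\cap(\CC^*)^{k_T}$ and $k_T:=\dim(\CC^n)^T$. The description of the $S$-symmetries of atomic blocks from Section~\ref{sect:Euler_Milnor} (permutations of isomorphic first-type blocks, rotations of loops, loop flips excluded) guarantees that $f^T$ is an invertible polynomial in $k_T$ variables and that $G_{f^T}=G_f^T$ acts freely on $V_{f^T}^*$. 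Varchenko's formula for invertible polynomials then gives $\chi(V_{f^T}^*)=(-1)^{k_T-1}|\det E^T|=(-1)^{k_T-1}|G_{f^T}|$, whence $\chi(Y^T)=(-1)^{k_T-1}$. The PC hypothesis $k_T\equiv n\pmod 2$ upgrades this to $\chi(Y^T)=(-1)^{n-1}$ uniformly in $T$, completing the first equality.

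The second equality is immediate from the definition of $D^\rtimes_{G_{\widetilde f}\rtimes S}$: the BHH dual of $G_{\widetilde f}\subset G_{\widetilde f}$ is the trivial subgroup of $(G_{\widetilde f})^*\cong G_f$, so $D^\rtimes_{G_{\widetilde f}\rtimes S}[G_{\widetilde f}\rtimes S/G_{\widetilde f}\rtimes S]=[G_f\rtimes S/\{e\}\rtimes S]$. The main technical point I expect to scrutinize carefully is the identification of $f^T$ as an invertible polynomial with the expected exponent matrix---this is where the block analysis of Section~\ref{sect:Euler_Milnor} is really essential, and the exclusion of loop flips is what prevents $f^T$ from acquiring a non-isolated singularity. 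Once that is in hand, the Varchenko formula and the mark-homomorphism injectivity yield the result almost immediately.
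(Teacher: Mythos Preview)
Your approach is correct and genuinely different from the paper's. The paper proves Lemma~\ref{lem:14} by decomposing $X=V_f\cap(\CC^*)^n$ into the strata $X^{([\{e\}\rtimes T])}$ and computing each contribution separately (Lemmas~\ref{lem:19} and~\ref{lem:20}): for $T=S$ the contribution is $(-1)^{n-1}[G_f\rtimes S/\{e\}\rtimes S]$, while for proper $T\subsetneq S$ the paper shows $\chi(X^{(T)})=0$ by an induction over intermediate subgroups $T\subsetneq U\subsetneq S$, using an inclusion--exclusion identity (Equation~(\ref{eq:decopm})) and Varchenko's formula together with PC to get the cancellation. You instead pass to the quotient $Y=X/G_f$, reduce to $\chi^S(Y)=(-1)^{n-1}[S/S]$, and verify this via the injective mark homomorphism by showing $\chi(Y^T)=(-1)^{n-1}$ for \emph{every} $T\subset S$; the identification $Y^T\cong V_{f^T}^*/G_{f^T}$ (justified by Proposition~\ref{prop:exT} and the freeness of the $G_f$-action) plus Varchenko and PC give this uniformly, with no induction needed.

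Your route is cleaner here: it replaces the stratum-by-stratum induction with a single fixed-point computation, and the vanishing of the coefficients for proper $T$ falls out automatically from the injectivity of marks. The paper's more hands-on method, however, is what gets recycled in Lemma~\ref{lem:25}, where the action of $S^I$ on $\CC^I$ need not satisfy PC and the marks $\chi(Y_I^T)$ are no longer uniform; there one really does need to track the individual coefficients $a_{[T]}$ and show they depend only on the coloured Hasse diagram, which your mark argument would not deliver directly. So your method buys elegance for the open-torus case, while the paper's method is the one that generalises to the boundary strata.
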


 \begin{lemma} \label{lem:16}
Under the imposed conditions one has, for the orbit $\mathfrak{I}$ of a non-empty proper subset $J \subset I_0$,
 \begin{equation}\label{eq:2')}
 \chi^{G_f\rtimes S}\left(\bigsqcup_{I\in{\mathfrak{I}}}\left(V_f\cap(\CC^*)^I\right)\right)=
 (-1)^{n} D^{\rtimes}_{G_{\widetilde{f}}\rtimes S}
 \chi^{G_{\widetilde{f}}\rtimes S}
 \left(\bigsqcup_{I\in{\mathfrak{I}}}\left(V_{\widetilde{f}}\cap(\CC^*)^{\overline{I}}\right)\right)\,.
 \end{equation}
 \end{lemma}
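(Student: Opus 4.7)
The plan is to reduce the identity to a comparison of single-torus contributions via induction, dispose of the degenerate case using Lemma~\ref{lem:degenerate}, and in the remaining case expand both sides in the canonical basis of $A^\rtimes(G_f\rtimes S^I)$ and compare coefficients using Varchenko's formula, with PC producing the correct sign. Applying equation~(\ref{eq:Ind}) to both sides with a chosen representative $I\in\mathfrak I$, using $S^{\overline I}=S^I$, and invoking the commutative diagram at the end of Section~\ref{sect:NC-Saito} relating $\Ind$ to $D^\rtimes$, the claim reduces to
$$\chi^{G_f\rtimes S^I}(V_f\cap(\CC^*)^I)=(-1)^n D^\rtimes_{G_{\widetilde f}\rtimes S^I}\,\chi^{G_{\widetilde f}\rtimes S^I}(V_{\widetilde f}\cap(\CC^*)^{\overline I}).$$

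If $f^I$ has strictly fewer than $|I|$ monomials, Lemma~\ref{lem:degenerate} kills the left side. Writing $E$ in block form with rows split by the set of monomials supported on $I$-variables and columns by $I$ versus $\overline I$, invertibility of $E$ forces the condition ``$f^I$ has $|I|$ monomials'' to be equivalent to ``$\widetilde f^{\overline I}$ has $|\overline I|$ monomials''; hence in the degenerate case both sides vanish. Assume henceforth that $f^I$ is invertible. Then $G_f$ acts on $(\CC^*)^I$ through $G_f/G_f^I$ freely, and applying Proposition~\ref{prop:exT} to this free action shows that every point stabilizer in $G_f\rtimes S^I$ of a point of $V_f\cap(\CC^*)^I$ is conjugate to $G_f^I\rtimes T$ for some $T\subset S^I$. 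The BHH duality applied to the block structure of $E$ gives the identification $\widetilde{G_{\widetilde f}^{\overline I}}=G_f^I$ inside $G_f=G_{\widetilde f}^*$, so $D^\rtimes_{G_{\widetilde f}\rtimes S^I}$ sends $[G_{\widetilde f}\rtimes S^I/G_{\widetilde f}^{\overline I}\rtimes T]$ to $[G_f\rtimes S^I/G_f^I\rtimes T]$. Expanding both sides and collecting coefficients, the identity reduces to $a_T=(-1)^n b_T$ for each conjugacy class $[T]$ of subgroups of $S^I$, where $a_T$ and $b_T$ are the respective coefficients of $[G_f\rtimes S^I/G_f^I\rtimes T]$ and $[G_{\widetilde f}\rtimes S^I/G_{\widetilde f}^{\overline I}\rtimes T]$.

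Each coefficient $a_T$ equals the Euler characteristic of the quotient $(V_f\cap(\CC^*)^I)^{([G_f^I\rtimes T])}/(G_f\rtimes S^I)$; after identifying the fixed locus with the Milnor fibre of the $T$-invariant restriction $f^{I,T}$ intersected with $(\CC^*)^{I/T}$ and quotienting by the induced free finite-group action, one finds that $a_T$ is proportional by a positive combinatorial factor to $\chi(V_{f^{I,T}}\cap(\CC^*)^{I/T})$, and similarly $b_T$ to $\chi(V_{\widetilde f^{\overline I,T}}\cap(\CC^*)^{\overline I/T})$. When $f^{I,T}$ is invertible (in which case $\widetilde f^{\overline I,T}$ is also, by the analogue of the block-decomposition argument above applied to the $T$-invariants), Varchenko's formula \cite[Theorem~(7.1)]{Varch} gives signs $(-1)^{|I/T|-1}$ and $(-1)^{|\overline I/T|-1}$, whose ratio is $(-1)^{|I/T|+|\overline I/T|}=(-1)^{\dim(\CC^n)^T}$, which equals $(-1)^n$ exactly by PC. The matching of absolute values then comes from the ordinary BHH duality applied to the invertible pair $(f^{I,T},\widetilde f^{\overline I,T})$; if instead $f^{I,T}$ fails to be invertible, the symmetric degeneracy argument forces both $a_T$ and $b_T$ to vanish.

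The hardest part is the combinatorial claim that, after passing to $T$-invariants on complementary tori, the restrictions $f^{I,T}$ and $\widetilde f^{\overline I,T}$ either both fail to be invertible or are themselves invertible polynomials that are Berglund--H\"ubsch dual to each other. This requires a careful analysis of the chain/loop description of Section~\ref{sect:Euler_Milnor}: a rotation of a loop of length $k\ell$ produces a loop of length $\ell$ with modified exponents, and one has to check that this operation commutes with the Berglund--H\"ubsch transposition on the $I$-block and the $\overline I$-block respectively. Once this combinatorial duality is established, the sign identity delivered by PC closes the argument.
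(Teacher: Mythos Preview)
Your reduction via equation~(\ref{eq:Ind}) and the commutative diagram of Section~\ref{sect:NC-Saito}, together with the disposal of the degenerate case through Lemma~\ref{lem:degenerate}, are correct and match the paper. The argument breaks down in the non-degenerate step, in two places.

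First, the coefficient $a_T$ is by definition the Euler characteristic of the quotient of the \emph{stratum} $(V_f\cap(\CC^*)^I)^{([G_f^I\rtimes T])}$, the set of points whose isotropy group is \emph{exactly} conjugate to $G_f^I\rtimes T$. Your sentence ``identifying the fixed locus with the Milnor fibre of $f^{I,T}$'' computes instead the Euler characteristic of the \emph{fixed locus} $X_I^T=(V_f\cap(\CC^*)^I)^T$, and the asserted proportionality $a_T\propto\chi(X_I^T)$ is false. For $I=I_0$, Lemma~\ref{lem:20} gives $a_T=0$ for every proper $T\subsetneq S$, while $\chi(X^T)=(-1)^{\dim(\CC^n)^T-1}\vert G_{f^T}\vert\neq0$. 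Passing from fixed locus to stratum requires an inclusion--exclusion over all strictly larger isotropy groups $U\supsetneq T$ and their $G_{f^{I,T}}$-conjugates, and this is precisely where PC does its work; you have suppressed it.

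Second, the claim that $f^{I,T}$ and $\widetilde f^{\,\overline I,T}$ are Berglund--H\"ubsch dual cannot hold as stated: the first is a polynomial in $\dim(\CC^I)^T$ variables, the second in $\dim(\CC^{\overline I})^T$ variables, and these numbers are in general different (take $f=x_1^5+\cdots+x_5^5$, $I=\{1,2,3\}$, $T=\{e\}$). No such duality is used in the paper, and none is needed.

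What the paper does instead is Lemma~\ref{lem:25}: by downward induction on $T\subset S^I$ one shows that $\chi(X_I^{(T)})/\vert G_{f^{I,T}}\vert$ is an integer depending only on the poset of subgroups of $S^I$ together with the parities of the numbers $\dim(\CC^I)^U$ (the ``coloured Hasse diagram'' $\Gamma^I$). The inductive step rests on the identity
\[
\chi\Bigl(X_I^T\setminus\bigcup_{g\in G_{f^{I,T}}}X_I^{g^{-1}S^Ig}\Bigr)
=\bigl((-1)^{\dim(\CC^I)^T-1}-(-1)^{\dim(\CC^I)^{S^I}-1}\bigr)\,\vert G_{f^{I,T}}\vert,
\]
which follows from Varchenko's formula applied to the two fixed loci and is $0$ or $\pm2\vert G_{f^{I,T}}\vert$ according to the colouring. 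Condition PC, via equation~(\ref{eq:quasi-PC}), forces the parity data for $I$ and for $\overline I$ to correspond, so the two families of coefficients match with the required sign; together with $\widetilde{G_{\widetilde f}^{\overline I}}=G_f^I$ this yields~(\ref{eq:2')}). The ``hard combinatorial claim'' you isolate is therefore not the right target: what is actually needed is this inclusion--exclusion bookkeeping, not a BH duality between the restricted polynomials.
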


\begin{proof*}{Proof of Theorem~\ref{thm:main}}
Theorem~\ref{thm:main} follows from 
Lemma~\ref{lem:14} 
and Lemma~\ref{lem:16}  (together with (\ref{eq:1)}) with $f$ and $\widetilde{f}$
 interchanged).
 \end{proof*}
 
Lemma~\ref{lem:14} will follow from the following two lemmas.
\begin{lemma} \label{lem:19}
Under the imposed conditions one has
 \begin{equation}\label{eq:deepest}
 \chi^{G_f\rtimes S}\left(V_f\cap \left((\CC^*)^n\right)^{([\{e\}\rtimes S])}\right)=
 (-1)^{n-1}\left[G_f\rtimes S/\{e\}\rtimes S\right]\,.
 \end{equation}
\end{lemma}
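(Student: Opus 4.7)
The plan is to compute the single non-zero coefficient in $\chi^{G_f\rtimes S}(V_f\cap((\CC^*)^n)^{([\{e\}\rtimes S])})$ --- namely that of $[G_f\rtimes S/\{e\}\rtimes S]$, since every point in the set has isotropy conjugate to $\{e\}\rtimes S$ --- and show that it equals $(-1)^{n-1}$. Using the standard identity $\chi(X^{([K])}/H)=\chi(X^{(K)}/N_H(K))$ with $K=\{e\}\rtimes S$, I first compute the normaliser: from the group law of $G_f\rtimes S$ one gets $(\underline{\mu},\tau)^{-1}(0,\sigma)(\underline{\mu},\tau)=(\tau^{-1}(\underline{\mu}^{-1}\sigma(\underline{\mu})),\tau^{-1}\sigma\tau)$, so $(\underline{\mu},\tau)$ normalises $\{e\}\rtimes S$ iff $\sigma(\underline{\mu})=\underline{\mu}$ for every $\sigma\in S$, giving $N_{G_f\rtimes S}(\{e\}\rtimes S)=G_f^S\rtimes S$ with $G_f^S:=\{\underline{\mu}\in G_f:\sigma(\underline{\mu})=\underline{\mu}\text{ for all }\sigma\in S\}$. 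Since $G_f$ acts freely on $(\CC^*)^n$, Proposition~\ref{prop:exT} forces the stratum of isotropy exactly $\{e\}\rtimes S$ to be the whole $S$-fixed subtorus $((\CC^*)^n)^S$ (no element of $G_f\setminus\{e\}$ can fix a point of the open torus, so the isotropy cannot be larger than $\{e\}\rtimes S$). Because $S$ acts trivially on $((\CC^*)^n)^S$, the coefficient is $\chi((V_f\cap((\CC^*)^n)^S)/G_f^S)$, and the $G_f^S$-action is free.

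Next I would analyse the restriction $f^S:=f|_{(\CC^n)^S}$. Let $d:=\dim(\CC^n)^S$ and identify $(\CC^n)^S\cong\CC^d$ by choosing one coordinate per $S$-orbit of indices. Using the chain/loop decomposition of $f$ from Section~\ref{sect:Euler_Milnor} together with the description of the $S$-action (permuting blocks of the first type and permuting-with-rotation the blocks of the second type, flips on loops excluded), a case analysis shows that an $S$-orbit of $r$ chain-blocks of length $m$ restricts to a single chain of length $m$ (with overall coefficient $r$), while an $S$-orbit of loop-blocks of length $k\ell$ with rotation stabiliser of order $t$ restricts to a loop of length $k\ell/t$ (with non-zero coefficient). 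Hence $f^S$ is itself an invertible polynomial in $d$ variables. Moreover, the map $G_f^S\to G_{f^S}$ sending an $S$-invariant $\underline{\lambda}$ to its tuple of common values on orbits is an isomorphism: injective by construction, surjective because any $\underline{\mu}\in G_{f^S}$ lifts to the $S$-invariant tuple $\underline{\lambda}$ with $\lambda_i=\mu_{O(i)}$ (where $O(i)$ is the orbit of $i$), which preserves every monomial of $f$ since its image preserves the corresponding monomial of $f^S$. In particular $|G_f^S|=|G_{f^S}|$, and $(V_f\cap((\CC^*)^n)^S)/G_f^S$ is identified with $(V_{f^S}\cap(\CC^*)^d)/G_{f^S}$.

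The final step invokes the Varchenko-type formula $\chi(V_g\cap(\CC^*)^d)=(-1)^{d-1}|G_g|$, valid for any invertible polynomial $g$ in $d$ variables (the specialisation of \cite[Theorem (7.1)]{Varch} whose sign content is also what is quoted in the Example). Applied to $g=f^S$ and combined with the freeness of the $G_{f^S}$-action, it gives $\chi((V_{f^S}\cap(\CC^*)^d)/G_{f^S})=(-1)^{d-1}$. Condition PC yields $d=\dim(\CC^n)^S\equiv n\pmod{2}$, so $(-1)^{d-1}=(-1)^{n-1}$, which is the asserted coefficient.

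The main obstacle is the structural step in the second paragraph, i.e.\ verifying that $f^S$ is invertible and that $G_f^S\to G_{f^S}$ is an isomorphism. Both reduce to a careful book-keeping of chain and loop blocks under the $S$-action; the loop case is the most delicate, paralleling the exponent manipulations carried out in Case~2 of the proof of Proposition~\ref{prop:exT}, and it is exactly here that the exclusion of loop flips (Section~\ref{sect:Euler_Milnor}) is essential --- without it the restricted polynomial could acquire a non-isolated critical point and the Varchenko formula would no longer apply.
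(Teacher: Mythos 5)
Your argument is correct and follows essentially the same route as the paper's proof: identify the stratum $X^{([\{e\}\rtimes S])}$ via the normalizer $N_{G_f\rtimes S}(\{e\}\rtimes S)=G_f^S\rtimes S$, use freeness of the $G_f^S$-action on the $S$-fixed torus, identify $G_f^S$ with $G_{f^S}$ for the invertible polynomial $f^S$, apply Varchenko's formula, and invoke PC to turn $(-1)^{\dim(\CC^n)^S-1}$ into $(-1)^{n-1}$. The only difference is cosmetic (you compute the coefficient via $\chi(X^{([K])}/H)=\chi(X^{(K)}/N_H(K))$ rather than the induction operation) and you spell out the chain/loop block bookkeeping behind the assertion that $f^S$ is invertible with $G_f^S\cong G_{f^S}$, which the paper states without proof.
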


\begin{lemma} \label{lem:20}
Under the imposed conditions one has, for a proper subgroup $T$ of $S$,
 \begin{equation}\label{eq:less_deep}
   \chi^{G_f\rtimes S}\left(V_f\cap \left((\CC^*)^n\right)^{([\{e\}\rtimes T])}\right)=0\,.
 \end{equation}
\end{lemma}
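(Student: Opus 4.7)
The plan is to prove, under PC, the master identity
$[G_f:G_f^T]\,\chi(V_f\cap((\CC^\ast)^n)^T) = \chi(V_f\cap(\CC^\ast)^n)$
for every subgroup $T\subseteq S$, and then to deduce Lemma~\ref{lem:20} by downward induction on $|T|$ with Lemma~\ref{lem:19} supplying the base case $T=S$.

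For the master identity I would first identify $V_f\cap((\CC^\ast)^n)^T$ with the torus part of the Milnor fibre of the restriction $f^T := f|_{(\CC^n)^T}$, a polynomial in $\dim(\CC^n)^T$ variables. The block structure recalled in Section~\ref{sect:Euler_Milnor} ensures that $f^T$ is again an invertible polynomial: the $T$-orbits on the monomials of $f$ biject with the $T$-orbits on the variables, so each monomial orbit collapses to a single monomial of $f^T$ (the subcase where an orbit is missing altogether falls under Lemma~\ref{lem:degenerate}). Varchenko's formula applied to $f^T$ then yields $\chi(V_f\cap((\CC^\ast)^n)^T) = (-1)^{\dim(\CC^n)^T-1}|\det E(f^T)|$, while the identification of $G_f^T$ with the group of diagonal symmetries of $f^T$ gives $|G_f^T| = |\det E(f^T)|$. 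Multiplying these and using PC (which forces $\dim(\CC^n)^T\equiv n\pmod{2}$), the left-hand side equals $(-1)^{n-1}|\det E|$, which is exactly $\chi(V_f\cap(\CC^\ast)^n)$ by Varchenko applied to $f$ itself.

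Set $X := V_f\cap(\CC^\ast)^n$. The free $G_f$-action on $(\CC^\ast)^n$ gives $\chi(G_f\cdot X^T) = [G_f:G_f^T]\chi(X^T) = \chi(X)$. Proposition~\ref{prop:exT} now lets me decompose $G_f\cdot X^T$ by the $(G_f\rtimes S)$-conjugacy class of the isotropy: in the abelian case this gives the disjoint union $G_f\cdot X^T = \bigsqcup_{T''\supseteq T} X^{([\{e\}\rtimes T''])}$, and in the non-abelian case one works with the $S$-saturation $\bigcup_{T_0\sim_S T} G_f\cdot X^{T_0}$ to obtain an analogous disjoint union indexed by conjugacy classes $[T'']$ having a representative that contains a conjugate of $T$. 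The downward induction then proceeds: the base case $T=S$ is Lemma~\ref{lem:19}, which says $\chi(X^{([\{e\}\rtimes S])}) = \chi(X)$; assuming the conclusion for every $T''\supsetneq T$ with $T''\neq S$, the decomposition collapses to $\chi(X) = \chi(X^{([\{e\}\rtimes T])}) + \chi(X^{([\{e\}\rtimes S])}) = \chi(X^{([\{e\}\rtimes T])}) + \chi(X)$, forcing $\chi(X^{([\{e\}\rtimes T])}) = 0$. Since all $(G_f\rtimes S)$-orbits in $X^{([\{e\}\rtimes T])}$ have the common size $[G_f\rtimes S:\{e\}\rtimes T]$, the coefficient of $[G_f\rtimes S/\{e\}\rtimes T]$ in the equivariant Euler characteristic vanishes as well.

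The hardest step will be the isotropy decomposition for non-abelian $S$: the sets $G_f\cdot X^{T_0}$ for $T_0\sim_S T$ can overlap, and separating the contribution of the single conjugacy class $[\{e\}\rtimes T]$ demands a careful weighting controlled by $N_S(T)/T$. A secondary technical point is verifying that $f^T$ stays invertible for every $T$; the delicate case is a loop block of $f$ acted on by $T$ with a non-trivial rotation, where one must check that the number of monomials of the restriction still matches the number of variables.
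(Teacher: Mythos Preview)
Your proposal is correct in substance and uses the same three ingredients as the paper: Varchenko's formula giving $\chi\bigl(X^{T}\bigr)=(-1)^{\dim(\CC^n)^T-1}\lvert G_{f^{T}}\rvert$, the PC hypothesis to make all these signs equal to $(-1)^{n-1}$, and a downward induction over subgroups $T\subset S$. Your ``master identity'' $[G_f:G_{f^T}]\,\chi(X^T)=\chi(X)$ is exactly equivalent to the identity the paper uses, namely $\chi(X^T)-[G_{f^T}:G_{f^S}]\,\chi(X^S)=0$.

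The one genuine difference is the way the induction is organised, and here the paper's choice sidesteps precisely the difficulty you flag as ``hardest''. Instead of passing to $G_f\cdot X^T$ or its $S$-saturation and trying to decompose into conjugacy-class strata $X^{([\{e\}\rtimes T''])}$ (which, as you note, do not sit cleanly inside $G_f\cdot X^T$ when $S$ is non-abelian), the paper works with $X^T$ itself and decomposes it by the \emph{exact} isotropy subgroup. The point is that if $x\in X^T$ has isotropy $H_x$, then $\{e\}\rtimes T\subset H_x$, and the element $\mu\in G_f$ produced by Proposition~\ref{prop:exT} conjugating $H_x$ to $\{e\}\rtimes U$ must satisfy $\tau(\mu)=\mu$ for all $\tau\in T$, i.e.\ $\mu\in G_{f^T}$. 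Hence every stratum of $X^T$ is of the form $g\,X^{(\{e\}\rtimes U)}$ with $g\in G_{f^T}$ and $T\subset U\subset S$, and the count of such translates for fixed $U$ is $[G_{f^T}:G_{f^U}]$. This gives the clean identity
\[
\chi\bigl(X^{(T)}\bigr)=\chi(X^T)-[G_{f^T}:G_{f^S}]\,\chi(X^S)-\sum_{T\subsetneq U\subsetneq S}[G_{f^T}:G_{f^U}]\,\chi\bigl(X^{(U)}\bigr),
\]
in which the first difference vanishes by Varchenko${}+{}$PC and the remaining sum vanishes by induction. No $S$-saturation, no $N_S(T)/T$ weighting, and the non-abelian case is no harder than the abelian one. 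Your route would reach the same endpoint, but the paper's bookkeeping is strictly simpler.
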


\begin{proof*}{Proof of Lemma~\ref{lem:19}}
For short, let $X:=V_f\cap(\CC^*)^n$. 
By Equation~(\ref{eq:torus}) it suffices to compute $\chi^{G_f\rtimes S}(X^{([\{e\}\rtimes T])})$ for a subgroup $T$ of $S$.
 For $T=S$ we have 
 $$
 X^{([\{e\}\rtimes S])}=\Ind_{N_{G_f\rtimes S}(\{e\}\rtimes S)}^{G_f\rtimes S}X^S\,,
 $$
 where $X^S=X^{(S)}$ is considered as an $N_{G_f\rtimes S}(\{e\}\rtimes S)$-space. 
 (Remember that the operation $\Ind$ was defined for spaces as well.) 
 The normalizer $N_{G_f\rtimes S}(\{e\}\rtimes S)$ is the semidirect (in fact direct) product $G_f^S\rtimes S$,
 where $G_f^S\subset G_f$ is the subgroup of $G_f$ consisting of the elements $g$ on which $S$ acts trivially,
 i.~e.\ $\varphi(\sigma)g=g$ for all $\sigma\in S$. This means that the subgroup $G_f^S$ consists of the
 elements $(\lambda_1, \ldots, \lambda_n)\in G_f$ such that $\lambda_i=\lambda_j$ for $i$ and $j$
 in the
 same $S$-orbit. Therefore $G_f^S$ coincides with the symmetry group $G_{f^S}$ of the function $f^S$
 which is again an invertible polynomial.
 Since $G_f^S$ acts freely on $X^S$ and $S$ acts trivially on it, one has
 $$
 \chi^{G_f^S\rtimes S}(X^S)=\chi\left(\left(V_{f^S}\cap((\CC^*)^n)^S\right)/G_f^S\right)\cdot
 \left[G_f^S\rtimes S/\{e\}\rtimes S\right]\,.
 $$
 Let $E^S$ be the exponent matrix of the invertible polynomial $f^S$.
 The formula of \cite[Theorem (7.1)]{Varch} implies that
\begin{equation} \label{eq:Varchenko}
 \chi\left(V_{f^S}\cap((\CC^*)^n)^S\right)=(-1)^{\dim (\CC^n)^S-1} \cdot \det E^S = (-1)^{\dim (\CC^n)^S-1} \cdot \vert G_f^S\vert.
\end{equation}
 But the PC property of $S$ implies
 $$
 (-1)^{\dim (\CC^n)^S-1}= (-1)^{n-1} .
 $$
 Therefore
 $$
 \chi\left(\left(V_{f^S}\cap((\CC^*)^n)^S\right)/G_f^S\right)=(-1)^{n-1},
 $$
 $$
 \chi^{G_f^S\rtimes S}\left((V_f\cap \left(\CC^*)^n\right)^{\{e\}\rtimes S}\right)=
 (-1)^{n-1}\left[G_f^S\rtimes S/\{e\}\rtimes S\right] \,.
 $$
 This implies the statement.
 \end{proof*}
 
 
 \begin{proof*}{Proof of Lemma~\ref{lem:20}}
 We keep the notation $X:=V_f\cap(\CC^*)^n$.
 We have
 $$
 \chi^{G_f\rtimes S}\left(X^{([\{e\}\times T])}\right)=
 \Ind_{N_{G_f\rtimes S}(\{e\}\times T)}^{G_f\rtimes S}
 \chi^{N_{G_f\rtimes S}(\{e\}\times T)}\left(X^{(T)}\right)\,.
 $$
 The normalizer of $\{e\}\times T$ in $G_f\rtimes S$ coincides with $G_{f^T}\rtimes N_S(T)$.
 The group $G_{f^T}\rtimes N_S(T)/\{e\}\times T$ acts freely on $X^{(T)}$. Therefore
 \begin{eqnarray*}
 \chi^{G_{f^T}\rtimes N_S(T)}\left(X^{(T)}\right)&=&
 \chi\left(X^{(T)}/\left(G_{f^T}\rtimes N_S(T)/\{e\}\times T\right)\right)\cdot\left[G_{f^T\rtimes N_S(T)}/\{e\}\times T\right]\\
 &=& \frac{\vert T\vert\cdot\chi\left(X^{(T)}\right)}{\vert G_{f^T}\rtimes N_S(T)\vert} \, \cdot \left[G_{f^T\rtimes N_S(T)}/\{e\}\times T\right].
\end{eqnarray*}
 
 Let us show that
 \begin{equation}\label{eq:nuli}
 \chi\left(X^{(T)}\right)=0\,.  
 \end{equation}
Assume that for all subgroups $U$ such that $T\varsubsetneq U\varsubsetneq S$ one has $\chi\left(X^{(U)}\right)=0$.
(In particular, this holds if $T$ is a maximal proper subgroup of $S$.) The equation $\chi\left(X^{(U)}\right)=0$
implies that, for $g\in G_{f^T}$, $\chi\left(X^{(g^{-1}Ug)}\right)=0$ (since $X^{(g^{-1}Ug)}=gX^{(U)}$).
Moreover, for $g_1,\,g_2\in G_{f^T}$, the spaces $X^{(g_1^{-1}Ug_1)}$  and $X^{(g_2^{-1}Ug_2)}$ either coincide
(if $g_1^{-1}Ug_1=g_2^{-1}Ug_2$) or do not intersect.

One has
\begin{equation}\label{eq:decopm}
 X^T\setminus \bigcup_{g\in G_{f^T}}X^{g^{-1}Sg}=X^{(T)}\cup\bigcup_{U:T\varsubsetneq U\varsubsetneq S}
 \bigcup_{g\in G_{f^T}} X^{(g^{-1}Ug)}.
\end{equation}
Due to the assumption one has
$$
\chi\left(\bigcup_{U:T\varsubsetneq U\varsubsetneq S}\bigcup_{g\in G_{f^T}} X^{(g^{-1}Ug)}\right)=0\,.
$$
Therefore $\chi(X^{(T)})=0$ if and only if $\chi\left(X^T\setminus \bigcup_{g\in G_{f^T}}X^{g^{-1}Sg}\right)=0$.
For $g_1,g_2\in G_{f^T}$, $g_1^{-1}S g_1=g_2^{-1}S g_2$ if and only if $g_1g_2^{-1}\in G_{f^S}$. This implies that
$$
\chi\left(X^T\setminus \bigcup_{g\in G_{f^T}}X^{g^{-1}Sg}\right)=
\chi(X^T)-\frac{\vert G_{f^T}\vert}{\vert G_{f^S}\vert}\chi\left(X^S\right)\,.
$$
 The same formula of \cite[Theorem (7.1)]{Varch} gives
 $$
 \chi(X^T)=(-1)^{\dim (\CC)^T-1}\vert G_{f^T}\vert\,,\quad \chi(X^S)=(-1)^{\dim (\CC)^S-1}\vert G_{f^S}\vert\,.
 $$
 The condition PC gives that signs in these equations are
 equal to $(-1)^{n-1}$. Therefore
 $$
 \chi\left(X^T\setminus \bigcup_{g\in G_{f^T}}X^{g^{-1}Sg}\right)=
 (-1)^{n-1}\left(\vert G_{f^T}\vert-\frac{\vert G_{f^T}\vert}{\vert G_{f^S}\vert}\vert G_{f^S}\vert\right)=0\,.
 $$
 This proves the statement.
 \end{proof*}
 
 Now we shall proceed to the proof of Lemma~\ref{lem:16}. Let 
$f^I$ have $\vert I\vert$ monomials. In this case $f^I$ is an invertible polynomial in the variables $x_i$
 with $i\in I$. If the action of $S^I$ on $\CC^I$ satisfies PC (in this case the action of $S^{\overline{I}}=S^I$
 on $\CC^{\overline{I}}$ satisfies PC as well), one can explicitly compute both sides  of~(\ref{eq:2')})
 (and in this
 way prove it) repeating the arguments  of the proof of Lemma~\ref{lem:14} for $I=I_0$. However, in general, the action of $S^I$ on $\CC^I$ does not
 satisfy PC. Nevertheless for any subgroup $T\subset S^I$, one has
 \begin{equation}\label{eq:quasi-PC}
 \dim(\CC^I)^T-\dim(\CC^I)^{S^I}\equiv \dim(\CC^{\overline{I}})^{T}-\dim(\CC^{\overline{I}})^{S^I}\mod 2\,.
 \end{equation}
 
We shall consider the poset of subgroups $T$ of $S^I$. It is represented by a graph which is called the {\em Hasse diagram}. We colour this graph by the parities of the codimensions of $(\CC^I)^T$ in $\CC^I$. We call this graph the {\em coloured Hasse diagram} of $S^I$ and denote it by $\Gamma^I$.
 Lemma~\ref{lem:16} will then follow from the following lemma.
 
\begin{lemma} \label{lem:25}
Under the imposed conditions one has, for a non-empty proper subset $I \subset I_0$ such that $f^I$ has $\vert I \vert$ monomials,
  \begin{equation}\label{eq:independence}
  \chi^{G_{f^I}\rtimes S^I}\left(V_{f^I}\cap (\CC^*)^I\right)=
  \sum_{[T]\in{\rm Conjsub\,}S^I}a_{[T]}\left[G_{f^I}\rtimes S^I/\{e\}\rtimes T\right]
 \end{equation}
 for some coefficients $a_{[T]}$, which depend only on the coloured Hasse diagram $\Gamma^I$.
\end{lemma}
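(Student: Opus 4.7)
The plan is to compute each coefficient $a_{[T]}$ explicitly and show that the polynomial-specific cardinalities cancel, leaving an expression depending only on $\Gamma^I$. Write $X:=V_{f^I}\cap(\CC^\ast)^I$ and let $\pi\colon G_{f^I}\rtimes S^I\to S^I$ denote the quotient map. By Proposition~\ref{prop:exT} applied to $f^I$ and $S^I$, every isotropy subgroup of a point of $X$ in $G_{f^I}\rtimes S^I$ is conjugate to some $\{e\}\rtimes T$, so the required expansion (\ref{eq:independence}) exists. From (\ref{eq:equiv_chi}), $a_{[T]}=\chi\bigl(X^{([\{e\}\rtimes T])}/(G_{f^I}\rtimes S^I)\bigr)$. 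Setting $Z_T:=\{x\in X\colon H_x=\{e\}\rtimes T\}$ and $N:=N_{G_{f^I}\rtimes S^I}(\{e\}\rtimes T)=G_{f^{I,T}}\rtimes N_{S^I}(T)$, arguing as in the proof of Lemma~\ref{lem:20} gives the identification $X^{([\{e\}\rtimes T])}/(G_{f^I}\rtimes S^I)\cong Z_T/N$, and the free action of $N/(\{e\}\rtimes T)$ on $Z_T$ yields $a_{[T]}=\vert T\vert\chi(Z_T)/\vert N\vert$.

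For each subgroup $U\subseteq S^I$ set $Y_U:=\{x\in X\colon\pi(H_x)=U\}$ and $X^{(\ge U)}:=\bigsqcup_{V\supseteq U}Y_V$. The construction in the proof of Proposition~\ref{prop:exT} shows that $X^{(\ge U)}=G_{f^I}\cdot X^U$ with $X^U:=V_{f^{I,U}}\cap((\CC^\ast)^I)^U$, and that the free $G_{f^I}$-action on $X^{(\ge U)}$ has stabilizer $G_{f^{I,U}}$ on $X^U$. Since the description of the $S^I$-action on atomic blocks in Section~\ref{sect:Euler_Milnor} implies that $f^{I,U}$ is itself an invertible polynomial, Varchenko's formula \cite[Theorem~(7.1)]{Varch} gives $\chi(X^U)=(-1)^{\dim(\CC^I)^U-1}\vert G_{f^{I,U}}\vert$, and so the key cancellation
\[
\chi\bigl(X^{(\ge U)}\bigr)=\frac{\vert G_{f^I}\vert}{\vert G_{f^{I,U}}\vert}\,\chi(X^U)=(-1)^{\dim(\CC^I)^U-1}\,\vert G_{f^I}\vert
\]
removes the polynomial-dependent cardinality $\vert G_{f^{I,U}}\vert$ entirely.

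Möbius inversion on the poset of subgroups of $S^I$ applied to the identity $\chi(X^{(\ge U)})=\sum_{V\supseteq U}\chi(Y_V)$ then yields $\chi(Y_T)=\vert G_{f^I}\vert\sum_{U\supseteq T}\mu_{S^I}(T,U)(-1)^{\dim(\CC^I)^U-1}$. The description $Y_T\cong G_{f^I}\times_{G_{f^{I,T}}}Z_T$ (free $G_{f^I}$-action with stabilizer $G_{f^{I,T}}$ on $Z_T$) gives $\chi(Z_T)=\vert G_{f^{I,T}}\vert\chi(Y_T)/\vert G_{f^I}\vert$, which combines with $a_{[T]}=\vert T\vert\chi(Z_T)/\vert N\vert$ to produce
\[
a_{[T]}=\frac{\vert T\vert}{\vert N_{S^I}(T)\vert}\sum_{U\supseteq T}\mu_{S^I}(T,U)\,(-1)^{\dim(\CC^I)^U-1},
\]
which depends only on the subgroup lattice of $S^I$ and on the parities $\dim(\CC^I)^U\bmod 2$—precisely the data of $\Gamma^I$. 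The principal obstacle is the cancellation of $\vert G_{f^{I,U}}\vert$ between Varchenko's formula and the $G_{f^I}/G_{f^{I,U}}$-saturation; recognizing that these two effects match exactly is what permits the polynomial-specific data to disappear from $a_{[T]}$.
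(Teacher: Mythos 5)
Your argument is correct, and it rests on the same pillars as the paper's own proof: Proposition~\ref{prop:exT} to reduce to isotropy types $\{e\}\rtimes T$, Varchenko's formula $\chi(X^U)=(-1)^{\dim(\CC^I)^U-1}\vert G_{f^{I,U}}\vert$, the observation that the $G_{f^I}$-saturation of the fixed-point set $X^U$ consists of $\vert G_{f^I}\vert/\vert G_{f^{I,U}}\vert$ pairwise disjoint translates (so that $\vert G_{f^{I,U}}\vert$ cancels), and the orbit count $a_{[T]}=\vert T\vert\,\chi(Z_T)/\vert N\vert$. Where you genuinely diverge is in the combinatorics over the subgroup lattice: the paper runs a descending induction, at each step comparing $X_I^T$ only with the saturation of $X_I^{S^I}$ and concluding that $\chi(X_I^{(T)})$ is a multiple of $\vert G_{f^{I,T}}\vert$ with a $\Gamma^I$-determined factor, without ever writing that factor down; you instead M\"obius-invert the identity $\chi(X^{(\ge U)})=\sum_{V\supseteq U}\chi(Y_V)$ over the whole lattice and arrive at the closed formula $a_{[T]}=\frac{\vert T\vert}{\vert N_{S^I}(T)\vert}\sum_{U\supseteq T}\mu_{S^I}(T,U)(-1)^{\dim(\CC^I)^U-1}$. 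The explicit formula is a real gain, since it makes transparent that the only $f$-dependent input is the list of parities $\dim(\CC^I)^U\bmod 2$. One caveat worth recording: these are dimension parities, i.e.\ the codimension colouring of $\Gamma^I$ twisted by the global sign $(-1)^{\vert I\vert}$, so strictly your coefficients are determined by $\Gamma^I$ together with $(-1)^{\vert I\vert}$ rather than by $\Gamma^I$ alone. The paper's proof has exactly the same feature (its factors also involve $(-1)^{\dim(\CC^I)^T-1}$), and the discrepancy between $(-1)^{\vert I\vert}$ and $(-1)^{\vert\overline{I}\vert}$ is precisely what produces the factor $(-1)^n$ in Lemma~\ref{lem:16}; your formula, combined with PC in the form $\dim(\CC^I)^U+\dim(\CC^{\overline{I}})^U\equiv n\ \mathrm{mod}\ 2$, gives $a_{[T]}^{\overline{I}}=(-1)^n a_{[T]}^{I}$ directly, which is exactly what the deduction of Lemma~\ref{lem:16} requires.
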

 
\begin{proof*}{Proof of Lemma~\ref{lem:16}}
Consider the function $f^I$. If $f^I$ has less than $\vert I \vert$ monomials, then Lemma~\ref{lem:degenerate} implies that $\chi^{G_f\rtimes S^I}\left(V_f\cap (\CC^*)^I\right)=0$. Otherwise,
Lemma~\ref{lem:25} implies the following equation for the orbit $\mathfrak{I}$ of the subset $I$
\begin{equation} \label{eq:left17}
  \chi^{G_{f}\rtimes S}\left(\bigsqcup_{J \in \mathfrak{I}} V_f\cap (\CC^*)^J\right)=
  \sum_{[T]\in{\rm Conjsub\,}S^I}a_{[T]}\left[G_{f}\rtimes S/G_f^I\rtimes T\right]\,.
\end{equation}
 By Lemma~\ref{lem:25}, the coefficients $a_{[T]}$ depend only on the coloured Hasse diagram $\Gamma^I$. But $\Gamma^I= \Gamma^{\overline{I}}$, since the poset of subgroups $T$ of $S^I$ is the same for
 $S^{\overline{I}}=S^I$ and the parities of the codimensions of $(\CC^I)^T$ in $\CC^I$ and of
 $(\CC^{\overline{I}})^T$ in $\CC^{\overline{I}}$ are also the same due to the condition PC: see
 Equation~(\ref{eq:quasi-PC}). This together with the fact that $G_{\widetilde{f}}^{\overline{I}}=\widetilde{G^I_f}$ (see~\cite[Lemma 1]{EG-BLMS})
 implies~(\ref{eq:2')}).
 \end{proof*}
 
 \begin{proof*}{Proof of Lemma~\ref{lem:25}}
 By Equation~(\ref{eq:torusI}), it suffices to compute 
$$
\chi^{G_{f^I}\rtimes S^I}\left(V_{f^I}\cap \left((\CC^*)^I\right)^{([\{e\}\rtimes T])}\right)
$$
for a subgroup $T \subset S$. But this
 is just the summand $a_{[T]}\left[G_{f^I}\rtimes S^I/\{e\}\rtimes T\right]$ in (\ref{eq:independence}). For $T=S^I$ this summand
 is computed in the same way as the corresponding summand for $I=I_0$ above and thus is equal to
 $$
 (-1)^{\dim(\CC^I)^{S^I}-1}\left[G_{f^I}\rtimes S^I/\{e\}\rtimes S^I\right]\,.
 $$
 (Pay attention that we do not substitute $(-1)^{\dim(\CC^I)^{S^I}-1}$ by $(-1)^{\dim(\CC^I)-1}$ because, in general,
 the action of $S^I$ on $\CC^I$ does not satisfy PC.) We shall show that, for a proper subgroup $T$ of $S^I$, the
 Euler characteristic $\chi\left( V_{f^I}\cap\left(\CC^*)^I\right)^{([\{e\}\rtimes T])}\right)$ is a multiple of
 $\vert G_{f^{I,T}}\vert$ with the factor depending only on the coloured Hasse diagram $\Gamma^I$.
 
 For short, let $X_I :=V_{f^I}\cap(\CC^*)^I$.
 Assume that for all subgroups $U$ such that $T\varsubsetneq U\varsubsetneq S$ the Euler characteristic
 $\chi\left(X_I^{(U)}\right)$ is a multiple of $\vert G_{f^{I,U}}\vert$. The union
 $\bigcup_{g\in{G_{f^{I,T}}}}X_I^{(g^{-1} Ug)}$ contains $\vert G_{f^{I,T}}\vert/\vert G_{f^{I,U}}\vert$
 different homeomorphic spaces. Therefore the Euler characteristic of it is a multiple of $\vert G_{f^{I,T}}\vert$.
 Using an analogue of Equation~(\ref{eq:decopm}), it is sufficient to show that 
 $\chi\left(X_I^T\setminus \bigcup_{g\in G_{f^{I,T}}}X_I^{g^{-1}Sg}\right)$
 is a multiple of $\vert G_{f^{I,T}}\vert$ with the factor depending only on the coloured Hasse diagram $\Gamma^I$.
 As above we have
$$
\chi\left(X_I^T\setminus \bigcup_{g\in G_{f^{I,T}}}X_I^{g^{-1}S^Ig}\right)=
\chi(X_I^T)-\frac{\vert G_{f^{I,T}}\vert}{\vert G_{f^{I,S^I}}\vert}\chi\left(X_I^{S^I}\right)\,,
$$
 $\chi(X_I^T)=(-1)^{\dim (\CC^I)^T-1}\vert G_{f^{I,T}}\vert$,
 $\chi(X_I^{S^I})=(-1)^{\dim (\CC^I)^{S^I}-1}\vert G_{f^{I,S^I}}\vert$. 
 Therefore
 $$
 \chi\left(X_I^T\setminus \bigcup_{g\in G_{f^{I,T}}}X_I^{g^{-1}S^Ig}\right)=
 \left((-1)^{\dim (\CC^I)^T-1}-(-1)^{\dim (\CC^I)^{S^I}-1}\right)\cdot\vert G_{f^{I,T}}\vert\,,
 $$
 i.~e.\ it is a multiple of $\vert G_{f^{I,T}}\vert$ with the factor (equal either to $0$ or to $\pm2$)
 only depending on the coloured Hasse diagram $\Gamma^I$. Now
 $$
 a_{[T]}=\chi\left(X_I^{(T)}/G_{f^{I,T}}\rtimes N_{S^I}(T)\right)=
 \frac{\vert T\vert}{\vert N_{S^I}(T)\vert}\cdot\frac{\chi\left(X_I^{(T)}\right)}{\vert G_{f^{I,T}}\vert}
 $$
 which shows that the coefficients $a_{[T]}$ only depend on the coloured Hasse diagram $\Gamma^I$.
\end{proof*}

\section{Examples of BHHT dual pairs with and without PC} \label{sect:dual-non-dual}
\begin{table}
\begin{tabular}{|r|c|p{54mm}|p{47mm}|p{19mm}|}      \hline
N& $f$ & \centering{$G$} & \centering{$S$} & $(h^{1,1},h^{2,1})$ \\
\hline
\hline
$\begin{array}{r} 2 \\ 83 \end{array}$ & $X_1$    & $\begin{array}{l} J \\ \left\{\frac{1}{5}(k_1,\ldots, k_5): 5\vert\sum k_i\right\} \end{array}$           & $\ZZ_2: (12)(34)$ & $\begin{array}{c} (3,59)\\ (59,3) \end{array}$ \\
\hline
$\begin{array}{r} 3 \\ 84 \end{array}$ & $X_1$    & $\begin{array}{l} J \\ \left\{\frac{1}{5}(k_1,\ldots, k_5): 5\vert\sum k_i\right\} \end{array}$           & $\ZZ_3: (123)$ & $\begin{array}{c} (5,49)\\ (49,5) \end{array}$ \\
\hline
$\begin{array}{r} 20 \\ 57 \end{array}$ & $X_1$    & $\begin{array}{l} \frac{1}{5}(1,4,1,4,0), J \\ \frac{1}{5}(0,0,1,1,3), \frac{1}{5}(1,4,4,1,0), J  \end{array}$        & $\ZZ_2: (12)(34)$ & $\begin{array}{c} (13,17) \\ (17,13) \end{array}$ \\
\hline
$\begin{array}{r} 21 \\58 \end{array}$& $X_1$    & $\begin{array}{l} \frac{1}{5}(1,4,0,0,0), J \\  \frac{1}{5}(1,1,0,0,3), \frac{1}{5}(0,0,1,4,0), J  \end{array}$        & $\ZZ_2: (12)(34)$ & $\begin{array}{c} (5,33) \\ (33,5) \end{array}$ \\
\hline
$\begin{array}{r} 22 \\59 \end{array}$& $X_1$    & $\begin{array}{l} \frac{1}{5}(1,4,2,3,0), J \\  \frac{1}{5}(0,0,1,1,3), \frac{1}{5}(1,4,2,3,0), J  \end{array}$        & $\ZZ_2: (12)(34)$ & $\begin{array}{c} (3,19) \\ (19,3) \end{array}$ \\
\hline
$\begin{array}{r} 42 \\ 73 \end{array}$& $X_1$    & $\begin{array}{l}  \frac{1}{5}(0,1,2,3,4), J \\   \frac{1}{5}(0,1,4,4,1), \frac{1}{5}(0,1,2,3,4), J \end{array}$        & $\ZZ_5: (12345)$ & $\begin{array}{c} (1,5) \\ (5,1) \end{array}$ \\
\hline
$\begin{array}{r} 11 \\43 \end{array}$ & $X_{14}$    & $\begin{array}{l} \frac{1}{3}(1,2,0,0,0),J \\ \frac{1}{15}(3,3,5,10,9), J   \end{array}$        & $\ZZ_2: (12)(34)$ & $\begin{array}{c}  (5,33) \\ (33,5) \end{array}$ \\
\hline
$\begin{array}{r}12 \\ 44 \end{array}$ & $X_{14}$    & $\begin{array}{l} \frac{1}{3}(1,2,1,2,0), J \\  \frac{1}{15}(13,8,2,7,0), J  \end{array}$        & $\ZZ_2: (12)(34)$ & $\begin{array}{c} (5,25) \\ (25,5) \end{array}$ \\
\hline
$\begin{array}{r}13 \\ 47 \end{array}$ & $X_{14}$    & $\begin{array}{l} \frac{1}{3}(1,2,1,2,0), J \\  \frac{1}{15}(13,8,2,7,0), J  \end{array}$       & $\ZZ_2: (13)(24)$ & $\begin{array}{c} (11,19) \\ (19,11) \end{array}$ \\
\hline
\hline
$\begin{array}{r} 80 \\ * \end{array}$ & $X_{15}$    & $\begin{array}{l} \frac{1}{41}(1,-4,16,18,10), J \\  J  \end{array}$       & $\ZZ_5: (12345)$ & $\begin{array}{c} (21,1) \\ (1,21) \end{array}$ \\
\hline
$\begin{array}{r} 82 \\ * \end{array}$ & $X_1$    & $\begin{array}{l} \frac{1}{5}(0,1,4,4,1), \frac{1}{5}(0,1,2,3,4), J \\  \frac{1}{5}(3,1,4,2,0), J  \end{array}$       & $D_{10}: (12345), (25)(34)$ & $\begin{array}{c} (11,3) \\ (3,11) \end{array}$ \\
\hline
\hline
$\begin{array}{r} 7 \\86 \end{array}$ & $X_1$    & $\begin{array}{l} J \\   \left\{\frac{1}{5}(k_1,\ldots, k_5): 5\vert\sum k_i\right\}  \end{array}$       & $\ZZ_2\times \ZZ_2: (12)(34), (13)(24)$ & $\begin{array}{c} (7,35) \\ (41,1) \end{array}$ \\
\hline
$\begin{array}{r} 25 \\90 \end{array}$ & $X_1$    & $\begin{array}{l} J \\  \left\{\frac{1}{5}(k_1,\ldots, k_5): 5\vert\sum k_i\right\}  \end{array}$       & $A_4: (123), (12)(34)$ & $\begin{array}{c} (7,27) \\ (29,5) \end{array}$ \\
\hline
$\begin{array}{r} 62 \\ 91 \end{array}$ & $X_1$    & $\begin{array}{l} J \\  \left\{\frac{1}{5}(k_1,\ldots, k_5): 5\vert\sum k_i\right\}  \end{array}$        & $A_5: (12345), (123)$ & $\begin{array}{c} (5,13) \\ (15,3) \end{array}$ \\
\hline
$\begin{array}{r} 26 \\ 63 \end{array}$ & $X_{14}$    & $\begin{array}{l} \frac{1}{3}(1,2,1,2,0), J \\  \frac{1}{15}(13,8,2,7,0), J  \end{array}$       & $\ZZ_2\times \ZZ_2: (12)(34), (13)(24)$ & $\begin{array}{c} (11,15) \\ (21,5) \end{array}$ \\
\hline
\end{tabular}
\caption{BHHT dual pairs}\label{tab:table}
\end{table}

In \cite{Yu}, one has a list of some Calabi--Yau threefolds defined in $\CC\PP^4$ by homogeneous polynomials
of degree 5 with actions of certain (in general non-abelian) finite groups with the pairs of orbifold
Hodge--Deligne numbers $(h^{1,1},h^{2,1})$ (and the orbifold Euler characteristics equal to $2(h^{1,1}-h^{2,1})$).
These pairs are equal to the Hodge numbers of the crepant resolutions of the corresponding threefolds.
The majority of these threefolds are defined by invertible polynomials $f$ and by groups of the form $G\rtimes S$,
where $G\subset G_f$, $S\subset S_n$.

In \cite{Mukai}, the corresponding computations were made for the same polynomials and the liftings of the
corresponding groups to the so-called Landau--Ginzburg orbifolds, i.~e.\ to the quantum cohomology groups in terms of
\cite{FJR2015}. The corresponding Hodge--Deligne numbers $(h^{1,1},h^{2,1})$ appeared to be symmetric to those for
the corresponding Calabi--Yau threefolds in \cite{Yu}. The table from \cite{Yu} is reproduced in \cite{Mukai}:
Table~1 therein. We shall use it for reference since it is more convenient: in 
this table the examples are numbered. 
In the course of a thorough analysis of the table we have found that (up to possible renumbering
of the variables) it
contains 13 pairs of BHHT dual invertible polynomials with non-abelian symmetry groups. They are listed below
in Table~1. (The pairs $21 \leftrightarrow 58$ and $11 \leftrightarrow 43$ were detected by Takahashi.)

We use the following notations (partially taken from \cite{Yu}). The numbers of the
examples (the first column) correspond to their numbering in \cite[Table 1]{Mukai}. The names of the polynomials $f$
refer to:
\begin{eqnarray*}
 X_1:&\ & x_1^5+x_2^5+x_3^5+x_4^5+x_5^5\,;\\
 X_{14}:&\ & x_1^4x_2+x_2^4x_1+x_3^4x_4+x_4^4x_3+x_5^5\,;\\
 X_{15} : & \ & x_1^4x_2+ x_2^4x_3 + x_3^4x_4 + x_4^4x_5 + x_5^4x_1\,. 
\end{eqnarray*}
These polynomials are self-dual, i.~e.\ $\widetilde{f} =  f$ (up to a permutation of the variables in the latter
case). Elements of the group $G_f$ (generators of the subgroups $G$ in the Table) are denoted by
$\frac{1}{m}(a_1, \ldots, a_5)$ with integers $a_i$ which is a short notation for the operator
$$
\text{diag\,}\left(\exp\left(\frac{2\pi a_1i}{m}\right),\ldots,\left(\frac{2\pi a_5i}{m}\right)\right)\,.
$$
The element $J$ is the exponential grading operator $J=\frac{1}{5}(1,1,1,1,1)$ (represented by the monodromy
transformation of the polynomial $f$).

The first example in each pair was copied from~\cite{Yu} (or from~\cite{Mukai}). The descriptions of the dual ones
differ from those in~\cite{Yu} by permutations of the coordinates (this was made to have BHHT dual groups) and by
presentations of the corresponding groups $G$. In particular, in Examples~44 and 47 we give other generators
of the groups to make explicit that the groups are the same as in Example~63. The description of Example~73
is adapted to our notations: we have listed generators of $G$ explicitly.

In this table, the first 9 pairs satisfy PC and the last 4 pairs do not satisfy it: see Section~\ref{sect:PC}.
There are added two more pairs: Examples~80 and 82 (satisfying PC) plus their BHHT duals which are not in
\cite[Table~5]{Yu}.
The Hodge numbers are easily computed in the way used in \cite{Yu}. (In the case dual to Example~80,
the corresponding Calabi--Yau threefold is the quotient of the smooth Klein quintic by a free action of the
cyclic group $\ZZ_5$.)
One can see that in all the cases satisfying PC the Hodge numbers of the dual pairs are symmetric to each other,
whereas in all the other cases (not satisfying PC) they are not. 
Moreover, almost all pairs of Hodge numbers indicated in \cite[Theorem~3.20]{Yu} without mirror ones correspond
to the Calabi--Yau threefolds in \cite[Table~5]{Yu} defined either by non-invertible polynomials or by
semi-direct products of groups not satisfying PC. This indicates that the condition PC seems to be
necessary for the mirror symmetry of BHHT dual pairs.

\begin{remark}
The pair $(3,11)$ of Hodge numbers of the threefold BHHT dual to Example~82 shows that the list of pairs in
\cite[Theorem~3.20]{Yu} is not complete (in contradiction to \cite[Remark~3.21]{Yu}).
\end{remark}


\bigskip
\noindent Leibniz Universit\"{a}t Hannover, Institut f\"{u}r Algebraische Geometrie,\\
Postfach 6009, D-30060 Hannover, Germany \\
E-mail: ebeling@math.uni-hannover.de\\

\medskip
\noindent Moscow State University, Faculty of Mechanics and Mathematics,\\
Moscow, GSP-1, 119991, Russia\\
E-mail: sabir@mccme.ru
\end{document}